\theoremstyle{definition}
\newtheorem{thm}{Theorem}[section]
\newtheorem{lem}[thm]{Lemma}
\newcommand{\Z}{\mathbb{Z}}
\newcommand{\hmax}{h_{\text{max}}}
\newcommand{\hmin}{h_{\text{min}}}
\newcommand{\hext}{h_{\text{ext}}}
\title{A lower bound on forcing numbers based on height functions}
\author[Fateh Aliyev]{Fateh Aliyev}
\address[Fateh Aliyev]{Venice High School,  Los Angeles, CA 13000.}
\email{\texttt{fateh.gurseloglu@gmail.com}}
\author{Nikita Gladkov}
\address[Nikita Gladkov]{Department of Mathematics, UCLA,  Los Angeles, CA 90095.}
\email{\texttt{gladkovna@ucla.edu}}
\begin{document}

\begin{abstract}
    We establish a lower bound on the forcing numbers of domino tilings computable in polynomial time based on height functions. This lower bound is sharp for a $2n$ by $2n$ square as well as other cases.
\end{abstract}

\maketitle

\section{Introduction}

For any graph $G$ and a perfect matching $M$ of $G$, a \textit{forcing
set} of $M$ is a subset $S$ of $M$ such that $S$ is contained in no other
perfect matching of $G$. The cardinality of the smallest such forcing set is
called the \textit{forcing number} of $M$, denoted by $f(M, G)$ \cite{Hkz}. The
smallest forcing number over all perfect matchings of $G$ is called the
\textit{forcing number} of $G$. 

Let $R$ be a simply connected region in the square lattice $\Z^2$, formed 
by joining one or more equal squares edge to edge so that the corners
of the $1 \times 1$ squares are points in $\Z^2$.
The dual graph of $R$ has vertices at the
centers of the squares. Two vertices are connected if the associated squares
are adjacent. We observe that perfect matchings of this graph correspond to domino
tilings of $R$ with $1 \times 2$ or $2 \times 1$ rectangles. 
For any tiling $T$ of $R$, the forcing number of $T$ is denoted by $f(T,
R)$, and the minimum such forcing number over all tilings of $R$ is called the
forcing number of $R$, denoted by $f(R)$. 

\begin{figure}[h]
\centering
\begin{tikzpicture}
% \begin{tabular}{|llll|l|l|}
% \hline
% \multicolumn{2}{|l|}{\cellcolor[HTML]{999999}} & \multicolumn{1}{l|}{}            &            &  &  \\ \hline
% \multicolumn{1}{|l|}{} & \multicolumn{2}{l|}{\cellcolor[HTML]{999999}}            &            &  &  \\ \hline
% \multicolumn{1}{|l|}{} & \multicolumn{1}{l|}{} & \multicolumn{2}{l|}{\cellcolor[HTML]{999999}} &  &  \\ \hline
% \multicolumn{1}{|l|}{} & \multicolumn{1}{l|}{} & \multicolumn{1}{l|}{}            &            &  &  \\ \hline
% \multicolumn{1}{|l|}{} & \multicolumn{1}{l|}{} & \multicolumn{1}{l|}{}            &            &  &  \\ \hline
% \multicolumn{1}{|l|}{} & \multicolumn{1}{l|}{} & \multicolumn{1}{l|}{}            &            &  &  \\ \hline

    \draw (0, 0) grid (6, 6);
    \draw[fill=gray] (0, 0) rectangle (2, 1);
    \draw[fill=gray] (1, 1) rectangle (3, 2);
    \draw[fill=gray] (2, 2) rectangle (4, 3);
% \end{tabular}
\end{tikzpicture}
\caption{Minimal forcing set in a $6 \times 6$ square.}
\label{fig:ONEM}
\end{figure}
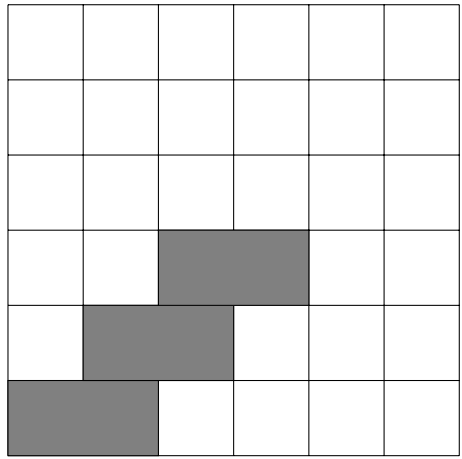

The forcing number of a  $2n$ by $2n$ square was found to be $n$ in \cite{Pk}.
The upper bound on the forcing number is given by a ``staircase'' example 
in Figure~\ref{fig:ONEM}. The lower bound is proved using 
a clever symmetry argument. The present paper gives a new proof of this result 
that generalizes to non-symmetric regions. More recently, the forcing numbers of thin
rectangles (rectangles of dimensions $2 \times n$ and $3 \times n$) have been
found using algebraic techniques \cite{Zz}. A more general technique was
presented in \cite{Rid}, and this technique was applied to compute the
forcing number of so-called ``stop signs'' \cite{Lp}, as well as non-planar
graphs such as the torus and the hypercube \cite{Rid}.

When $R$ is a simply connected closed region with no holes, it is determined
by its boundary $\partial R$. We impose a
black and white coloring on the squares of $R$ in the following way: if the
lower-left corner of a square corresponds to the point $(a, b)$ such that $a +
b \equiv 0 \pmod 2$, color it white; otherwise, color it black. We say that 
an edge $(x, y)$ of the grid belongs to a tiling $T$ if 
it is a part of a domino's boundary. A result proven
in \cite{Thu} (see also \cite{Fou}) states that for any tiling $T$ of $R$ with
dominoes, there exists a \textit{height function} $h_T: R \to \Z$, 
unique up to an integer constant such that for every edge $(x, y)$ in
$T$ such that there is a white square to the left, $h_T(x) - h_T(y) = 1$. 

Furthermore, Thurston showed in the same paper that if the value of 
the height function on $\partial R$ is fixed, 
then there exist a maximum height function $\hmax$ and a minimum height function $\hmin$
such that every other possible height function $h$ satisfies
$\hmin(x) \le h(x) \le \hmax(x)$
for all points $x$ in $R$. With this information, he was able
to provide an algorithm which could then calculate $\hmax$ or $\hmin$ in 
$O(n\log n)$ time, where $n = |R|$ \cite{Thu}. 
In this paper we use $\hmin$ and $\hmax$ in the lower bound on 
the forcing number of a region $R$. While our bound is weaker than 
the one by Riddle, it is much easier to compute.

\begin{thm}\label{main}
The forcing number of a region $R$ on the square lattice
is bounded below by 
%$\frac{1}{4} \max_{x \in R}(g(x))$, where $g(x) = \hmax(x) - \hmin(x)$.
$$\frac{1}{4} \max_{x \in R}\big(\hmax(x) - \hmin(x)\big).$$
\end{thm}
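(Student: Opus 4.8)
The plan is to reformulate the forcing condition as a covering condition for alternating cycles and then translate ``many edge-disjoint alternating cycles'' into height-function language. First I would record the standard reformulation: if $S \subseteq T$ is a forcing set and $C$ is any cycle alternating between edges of $T$ and edges not in $T$, then $S$ must contain an edge of $C \cap T$, since otherwise $T \triangle C$ would be a second tiling containing $S$. Consequently, if $C_1,\dots,C_m$ are alternating cycles whose $T$-edge sets are pairwise disjoint, then $|S|\ge m$, and the whole task becomes exhibiting enough such cycles.

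Next I would set up the height-function dictionary. For any second tiling $T'$, the symmetric difference $T\triangle T'$ is a union of vertex-disjoint alternating cycles, and I would check (the $2\times 2$ flip being the model case) that $h_T-h_{T'}$ is locally constant off these cycles and jumps by exactly $\pm 4$ across each one. Since $h_T-h_{T'}\equiv 0$ on $\partial R$, for every vertex $x$ the number of cycles of $T\triangle T'$ separating $x$ from $\partial R$ is at least $\tfrac14|h_T(x)-h_{T'}(x)|$; being nested, these are disjoint. Taking $T'$ to be the tiling realizing $\hmax$ or the one realizing $\hmin$ at the maximizing vertex $x_0$ already gives $|S|\ge \tfrac14\max\big(\hmax(x_0)-h_T(x_0),\,h_T(x_0)-\hmin(x_0)\big)\ge \tfrac18\big(\hmax(x_0)-\hmin(x_0)\big)$.

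To upgrade the constant from $\tfrac18$ to $\tfrac14$ I would combine both extremes. Writing $k_1=\tfrac14(\hmax(x_0)-h_T(x_0))$ and $k_2=\tfrac14(h_T(x_0)-\hmin(x_0))$, so that $k_1+k_2=\tfrac14(\hmax(x_0)-\hmin(x_0))$, I obtain $k_1$ nested cycles from $T\triangle T_{\max}$ and $k_2$ from $T\triangle T_{\min}$, all separating $x_0$ from $\partial R$. If these two families have pairwise disjoint $T$-edges, then $S$ needs $k_1+k_2$ distinct edges, and minimizing over all tilings $T$ yields the theorem.

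The main obstacle is exactly this independence claim, which carries the whole content for a ``middle'' tiling $T$ (one with $h_T(x_0)$ near the midpoint): a domino of $T$ lying in neither extreme could a priori sit on a separating cycle of each family, letting one element of $S$ cover two cycles at once. I expect to resolve this using planarity together with the distributive-lattice structure of height functions, either by showing that a $T$-domino on an $\hmax$-side separating cycle cannot lie on an $\hmin$-side separating cycle (comparing the signs of the height jumps, which point toward $x_0$ in both families), or by an uncrossing argument replacing two crossing cycles from the two families by disjoint ones while preserving both their alternating character and the property of separating $x_0$ from $\partial R$. Checking that uncrossing preserves alternation is the delicate point; the remaining ingredients, namely the forcing reformulation and the ``jump by $4$'' dictionary, are routine.
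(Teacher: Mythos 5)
Your reduction of forcing to hitting alternating cycles and the ``jump by $4$'' dictionary are both correct, and together they do give the $\tfrac18$ bound rigorously. However, the independence claim that you correctly identify as carrying the whole weight of the upgrade to $\tfrac14$ is not merely unproven --- it is \emph{false}, so neither a sign comparison nor an uncrossing argument can complete the proof. Concretely, let $R$ be the $4\times4$ square, let $x_0$ be its center, and let $T$ be the ``brick'' tiling by eight horizontal dominoes, so that $h_T(x_0)$ sits exactly midway between $\hmin(x_0)=-4$ and $\hmax(x_0)=4$ and $k_1=k_2=1$. The two extreme tilings are the two windmill tilings (each the $90^\circ$ rotation of the other); labelling squares by their lower-left corners, one of them is $\{(0,0),(0,1)\}$, $\{(1,0),(2,0)\}$, $\{(1,1),(1,2)\}$, $\{(2,1),(2,2)\}$, $\{(3,0),(3,1)\}$, $\{(3,2),(3,3)\}$, $\{(0,2),(0,3)\}$, $\{(1,3),(2,3)\}$, and the other is its mirror image. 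The symmetric difference of $T$ with the first windmill is a \emph{single Hamiltonian $16$-cycle} through all sixteen squares; it encloses $x_0$, has jump $\pm4$, and uses \emph{all eight} dominoes of $T$. The symmetric difference of $T$ with the second windmill is a single $8$-cycle around the two middle rows; it also encloses $x_0$, has the opposite jump, and uses the four middle-row dominoes of $T$. Whichever windmill is $T_{\max}$, your two forced families consist of these two cycles, and they share four $T$-edges (for instance the domino on squares $(0,1),(1,1)$); your argument therefore certifies only $f(T,R)\ge 1$, not the true bound $2$. This example also shows exactly why the local sign idea fails: across a shared domino, $h_{T_{\max}}-h_T$ and $h_{T_{\min}}-h_T$ jump in the \emph{same} direction, which is perfectly compatible with the two cycles enclosing $x_0$ with opposite jump signs --- their interiors then lie locally on opposite sides of the shared edge, and planarity does not forbid that. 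Uncrossing fails for the reason you anticipated: splicing the two cycles at a shared $T$-edge produces a vertex incident to one $T_{\max}$-edge and one $T_{\min}$-edge of the resulting walk, so it is no longer $T$-alternating and cannot be toggled against $T$.

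The paper avoids this trap entirely by never trying to extract disjoint cycles from a ``middle'' tiling. Instead it proves (Theorem~\ref{square}) that $\hmax(x)-\hmin(x)=2c(x)$, which places a $2n\times2n$ square $S\subseteq R$ around $x_0$ with $R\setminus S$ tileable, and then combines Riddle's minimum--maximum--excess bound (Theorem~\ref{riddle}) with the Lam--Pachter computation for the square (Theorem~\ref{lam}); that argument is uniform over all tilings, including brick-like ones where your two extreme families collapse onto common edges. To salvage a cycle-based proof you would need to hit the forcing set with alternating cycles beyond those arising from $T\triangle T_{\max}$ and $T\triangle T_{\min}$ (in the example above, the small $2\times2$ flips), and organizing that is essentially the combinatorial content that Riddle's theorem packages.
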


We also derive a similar bound for the triangular lattice.
\begin{thm}\label{thm:trimain}
The forcing number of a region $R$ on the triangular lattice
is bounded below by 
$$\frac{1}{3} \max_{x \in R}\big(\hmax(x) - \hmin(x)\big).$$
\end{thm}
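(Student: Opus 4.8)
The plan is to run the same argument that will establish Theorem~\ref{main}, transplanted to the triangular lattice, where the only genuine structural change is that an elementary flip alters the height by $3$ rather than by $4$. First I would record the triangular analogue of the height machinery quoted above: attached to each perfect matching $T$ there is a height function $h_T$, unique up to an additive constant, whose restriction to $\partial R$ is prescribed independently of $T$; and for fixed boundary data the admissible heights form a distributive lattice with top element $\hmax$ and bottom element $\hmin$. The numerical input I need is that the minimal local move, the flip exchanging the matching along an elementary cycle, changes $h$ at a single interior vertex by exactly $3$, so that $\hmax(x)-\hmin(x)\in 3\Z_{\ge 0}$ for every $x$ and any two matchings with the same boundary are connected by flips.

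Next I would reformulate forcing lattice-theoretically. Given a matching $T$ and a forcing set $S\subseteq T$, the matchings containing $S$ form a sublattice, so there is a largest such matching $T^{+}$ and a smallest $T^{-}$, and $S$ forces $T$ precisely when $T^{+}=T^{-}=T$. The decisive observation is that every alternating cycle of $T^{+}\triangle T_{\max}$ must meet $S$, where $T_{\max}$ is the matching of height $\hmax$: otherwise switching along that single cycle would produce a matching strictly above $T^{+}$ still containing $S$, contradicting the maximality of $T^{+}$. The symmetric statement holds for the cycles of $T^{-}\triangle T_{\min}$.

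With this in hand I would carry out the counting at a vertex $v$ attaining $\hmax(v)-\hmin(v)=M:=\max_{x\in R}\big(\hmax(x)-\hmin(x)\big)$. Under the forcing hypothesis $T=T^{+}=T^{-}$, the cycles of $T\triangle T_{\max}$ are pairwise vertex-disjoint, and exactly $\tfrac13\big(\hmax(v)-h_T(v)\big)$ of them enclose $v$, since switching a single enclosing cycle raises $h(v)$ by $3$; likewise $\tfrac13\big(h_T(v)-\hmin(v)\big)$ cycles of $T\triangle T_{\min}$ enclose $v$. Together these two families form $\tfrac13 M$ pairwise vertex-disjoint alternating cycles encircling $v$, and by the previous paragraph each contains an element of $S$.

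The main obstacle, exactly as on the square lattice, is to promote this to $|S|\ge \tfrac13 M$, that is, to show that the forcing edges detected by the two families are genuinely distinct. A single edge of $S$ lies on at most one cycle of each family, so naive counting yields only $|S|\ge\tfrac16 M$; the missing factor comes from proving that no edge of $S$ lies simultaneously on an ascending cycle of $T\triangle T_{\max}$ and a descending cycle of $T\triangle T_{\min}$ with both enclosing $v$. I expect to establish this by analysing the nested level curves of $\hmax-\hmin$ about the global maximizer $v$: such a shared edge would be forced to run along a level set of $\hmax-\hmin$ while crossing the level sets of $\hmax-h_T$ and of $h_T-\hmin$ in opposite senses, and the planar nesting of these curves around the point where $\hmax-\hmin$ is largest should preclude this. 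Once distinctness is secured, the $\tfrac13 M$ cycles force $\tfrac13 M$ distinct edges of $S$; the triangular bookkeeping differs from the square case treated in Theorem~\ref{main} only through the systematic replacement of $4$ by $3$.
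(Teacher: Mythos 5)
Your setup (ascending cycles of $T\triangle T_{\max}$, descending cycles of $T\triangle T_{\min}$, each of which must contain an edge of $S$, with exactly $\frac13\big(\hmax(v)-h_T(v)\big)$ resp.\ $\frac13\big(h_T(v)-\hmin(v)\big)$ of them enclosing $v$) is sound, and you have located the difficulty exactly where it is: edges shared between the two families. But the lemma you propose to close the gap with is \emph{false}, so the approach cannot be completed. Since you assert the same argument proves Theorem~\ref{main} with $3$ replaced by $4$, it suffices to refute it on the square lattice: take $R$ the $4\times 4$ square, $v=(2,2)$ its center (so $\hmax(v)-\hmin(v)=8$), and $T$ the ``brick'' tiling by eight horizontal dominoes in aligned columns. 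One computes that $T\cap T_{\max}=\emptyset$, so $T\triangle T_{\max}$ is a \emph{single} Hamiltonian alternating cycle $C_1$ which encloses $v$ and contains \emph{every} domino of $T$, while $T\triangle T_{\min}$ is a single cycle $C_2$ running through the two middle rows of squares, also enclosing $v$. Hence whichever edge of a forcing set $S$ meets $C_2$ automatically lies on $C_1$ as well: \emph{every} forcing set of $T$ violates your distinctness claim. (Concretely, the four dominoes of $T$ in the two middle rows form a minimum forcing set of $T$ consisting entirely of such doubly-counted edges.) Your local heuristic is correct as far as it goes --- across a shared domino the jumps of $\hmax-h_T$ and $h_T-\hmin$ have opposite signs, so the interiors of $C_1$ and $C_2$ lie on opposite sides of it --- but two curves that cross each other can still both enclose $v$, and in this example they do. So the method is genuinely stuck at $\max(k_1,k_2)$, i.e.\ at $\frac16 M$ on the triangular lattice, and for the brick tiling it certifies only $|S|\ge 1$ where the theorem demands $2$. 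The analogous uniform lozenge tiling of a regular hexagon exhibits the same failure.

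The paper avoids cycles altogether. It first proves (the triangular analogue of Theorem~\ref{square}) that if $g(v)=\hmax(v)-\hmin(v)=3n$, then the regular hexagon $S$ of side $n$ centered at $v$ lies in $R$ with $R\setminus S$ tileable; tileability of $R\setminus S$ gives Hall's condition outside $S$, which shows that the excess $|N(U)|-|U|$ of any set $U$ of black triangles is at least the excess of $U$ restricted to $S$; Theorem~\ref{thm:trilam} then bounds the minimum maximum excess of the hexagon below by $n$, and Riddle's Theorem~\ref{riddle} converts this into $f(T,R)\ge n$ for \emph{every} tiling $T$. Note that in the brick example above, two edge-disjoint alternating cycles do exist (one through each adjacent pair of rows), but they are not cycles of $T\triangle T_{\max}$ or $T\triangle T_{\min}$; producing enough pairwise edge-disjoint alternating cycles in general is essentially what the excess argument accomplishes, and no argument confined to the two canonical families can do it.
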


Finally, in Theorem~\ref{thm:forcehex} we apply this bound to compute the forcing numbers of hexagons. To the best of our knowledge, they were never computed before.

\section{Characterizing the difference}

The following lemma by Fournier characterizes all height functions on a region $R$, and
therefore all tilings of $R$:

\begin{lem}[\cite{Fou}]\label{fou}
Let $H_R$ be the set of all height functions that correspond to a tiling of
$R$. Then, a function $h: R \to \Z$ belongs to $H_R$ if and only if the
following conditions hold:
\begin{enumerate}
\item For every $x, y \in R$ such that $x = (a_1, b_1)$, $y = (a_2, b_2)$, and
	$a_1 - a_2 = b_1 - b_2 = 0 \pmod 2$, we have $h(x) \equiv h(y) \pmod 4$.
\item For every edge $(x, y) \in \partial R$ such that when crossing from $x$
	to $y$ there is a white square to our left, $h(x) - h(y) = 1$.
\item For every edge $(x, y) \in R$, $|h(x) - h(y)| \le 3$.
\end{enumerate}
\end{lem}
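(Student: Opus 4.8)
The plan is to prove both implications of the biconditional, treating the ``if'' direction as the substantial one. Throughout I will use the standard local description of a height function: if $T$ is a tiling and we traverse a grid edge $(x,y)$ so that a white square lies to our left, then $h_T(x)-h_T(y)=1$ when the edge is a domino boundary and $h_T(x)-h_T(y)=-3$ when the edge is crossed by a domino (the two squares sharing it form one domino). In particular every such white-on-left difference satisfies $h_T(x)-h_T(y)\equiv 1\pmod 4$ and $|h_T(x)-h_T(y)|\le 3$.

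For the ``only if'' direction, suppose $h=h_T$ for a tiling $T$. Condition (3) is immediate, since each edge difference is one of $\pm 1,\pm 3$. Condition (2) holds because a boundary edge can never be crossed by a domino, so it is a domino boundary with difference $1$. For condition (1), I would note that a horizontal step changes $h$ by $\mp 1\pmod 4$ and a vertical step by $\pm 1\pmod 4$, with signs dictated by the coloring, so that any displacement in $(2\Z)\times(2\Z)$ changes $h$ by $0\pmod 4$; since two vertices lie in a common parity class exactly when they differ by such a displacement, $h$ is constant modulo $4$ on each class.

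For the ``if'' direction, assume $h$ satisfies (1)--(3); the goal is to manufacture a tiling $T$ with $h_T=h$. By (1), $h\bmod 4$ is constant on each of the four parity classes of vertices, so it is recorded by four residues. I want to show these residues agree, up to one global additive constant, with those of a genuine height function (the ``canonical pattern'' in which the four classes receive the four distinct residues $0,1,2,3$); this is equivalent to the assertion that for every edge, interior or not, the white-on-left difference is $\equiv 1\pmod 4$. Granting this, condition (3) forces each white-on-left difference into $\{-3,1\}$, the only residues $\equiv 1\pmod 4$ in $[-3,3]$. Then, traversing any unit square counterclockwise, its four directed edge-differences telescope to $0$; each lies in $\{-3,1\}$ if the white square is on the left and in $\{-1,3\}$ otherwise, and in either case the count forces exactly one crossed edge ($|h(x)-h(y)|=3$) per square. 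That edge is interior, boundary edges being uncrossed by (2), so declaring the two squares sharing a crossed edge to be a domino produces a consistent tiling $T$; by construction $h=h_T\in H_R$.

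The hard part will be the propagation step: showing that (1) and (2) already force the canonical residue pattern in the interior. The key observation is that whether a directed edge has white on the left depends only on the ordered pair of parity classes it joins, not on the edge itself, so the four admissible adjacencies form a single $4$-cycle on the classes. Condition (2) assigns the value $1$ to the white-on-left difference of every boundary edge, pinning the residue difference across each class-pair realized on $\partial R$; since $\partial R$ is a connected closed lattice curve meeting all four parity classes, the adjacencies it realizes span this $4$-cycle, so all inter-class residue differences are determined up to a single constant and match the canonical pattern. I expect the fiddliest point to be the geometric claim that $\partial R$ meets all four classes and realizes a spanning set of adjacencies. An alternative that sidesteps it, valid whenever $R$ is tileable, is to fix a reference height function $h_0$ of some tiling and study $g=h-h_0$: by (2) it is constant on the connected boundary, and by (1) it is constant modulo $4$ on each class, hence constant modulo $4$ throughout, which yields $h\equiv h_0\pmod 4$ and the desired residues at once.
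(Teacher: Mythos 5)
Your architecture for the ``if'' direction is the right one: once every white-on-left difference is known to be $\equiv 1 \pmod 4$, condition (3) pins each such difference to $\{1,-3\}$, the telescoping argument around each unit square forces exactly one crossed edge per square, condition (2) keeps crossed edges off $\partial R$, and the crossed edges define the dominoes. The genuine gap is exactly where you suspected: the propagation step. The claim that $\partial R$ meets all four parity classes and realizes a spanning set of class-adjacencies is false. Take $R$ to be the $2\times 2$ square with corners $(0,0)$, $(2,0)$, $(2,2)$, $(0,2)$: every boundary vertex has at least one even coordinate, so the odd-odd class occurs only at the interior vertex $(1,1)$, and the boundary realizes only two of the four class-adjacencies. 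Your fallback argument has the same hole: ``constant modulo $4$ on each class and constant on the connected boundary, hence constant modulo $4$ throughout'' is a non sequitur precisely when some parity class does not meet $\partial R$, which is the case here.

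Worse, the gap cannot be repaired, because the statement as you (and the paper) phrase condition (1) is actually false, with the same $2\times 2$ square as a counterexample. Its boundary heights are forced up to a constant, e.g.\ $h(0,0)=0$, $h(1,0)=-1$, $h(2,0)=0$, $h(2,1)=1$, $h(2,2)=0$, $h(1,2)=-1$, $h(0,2)=0$, $h(0,1)=1$; now set $h(1,1)=0$. Condition (1) holds vacuously for the singleton odd-odd class, condition (2) holds by construction, and every edge difference is $\pm 1$, so condition (3) holds; yet the only two tilings of $R$ have $h(1,1)=\pm 2$, so this $h$ is not in $H_R$ (equivalently, no edge at $(1,1)$ has difference $\pm 3$, so no domino could cover the four unit squares). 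What is needed is Fournier's stronger congruence hypothesis: $h(x)-h(y)\equiv 1 \pmod 4$ across \emph{every} edge crossed with a white square on the left, i.e.\ $h$ agrees modulo $4$ with the canonical plane pattern, not merely that $h$ is constant modulo $4$ on each parity class. With that corrected hypothesis your construction goes through verbatim. Note finally that there is no proof in the paper to compare against --- the lemma is quoted from \cite{Fou} --- and the version printed there appears to be an over-weakened paraphrase of Fournier's actual condition; your counexample-prone difficulty is evidence of that, not a defect of your construction.
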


Following \cite{Pst}, consider the tilings of the entire plane $\Z^2$. Let
$\alpha(x, \cdot)$ be the maximal height function from the set of height functions
with value zero at $x$. 
Denote by $\delta(i, j)$ the expression $i-j~(\textrm{mod}~2)$.
If $x = (a, b)$ and $y = x + (i, j)$, $\alpha(x, y)$ is defined by:
\begin{equation*}
\alpha(x, y) = 
\begin{cases}
2\| y - x \|_{\infty} + \delta(i, j), \qquad \text{ if } \delta(a, b) = 0
\text{ and } i \ge j, \\
2\| y - x \|_{\infty} - \delta(i, j), \qquad \text{ if } \delta(a, b) = 0
\text{ and } i < j, \\
2\| y - x \|_{\infty} - \delta(i, j), \qquad \text{ if } \delta(a, b) = 1
\text{ and } i \ge j, \\
2\| y - x \|_{\infty} + \delta(i, j), \qquad \text{ if } \delta(a, b) = 1
\text{ and } i < j, \\
\end{cases}
\end{equation*}

Alternatively, one can note that $\alpha(x, y)$ is the length of the shortest path from $x$ to $y$ that always has a white square to the left.

Additionally, as in \cite{Pst}, we define a \textit{geodesic path}
$(x_1, x_2, .., x_n)$ by:
\begin{itemize}
\item For $i < n$, the points $x_i$ and $x_{i+1}$ are corners of a common
	$1 \times 1$ square in $\Z^2$;
\item For $i < n$, we have $\| x_{i+1} - x_1 \|_{\infty} = \| x_i - x_1
	\|_{\infty} + 1$.
\end{itemize}

Alternatively, the path
$(x_1, x_2, .., x_n)$ is geodesic if for every $i$, we have
$$\alpha(x_1, x_i) + \alpha(x_i, x_n) = \alpha(x_1, x_n).$$

Furthermore, we let $x \sim_R y$ if there exists a geodesic path between $x$
and $y$ that is entirely within $R$. Finally, we summarize some facts shown in \cite{Pst, Tas}.

\begin{lem}\cite{Pst}\label{pst_lemma}
\begin{enumerate}
\item If $h$ is a valid height function for $R$, then $h(y) - h(x) \le
	\alpha(x, y)$ for all $x, y \in R$ such that $x \sim_R y$.
\item Let $x$ and $y$ be adjacent points in $R$, and suppose there exists a geodesic
	path from $x'$ to $x$ such that the path contains no points on $\partial R$
	other than perhaps $x'$. Then, there exists another geodesic path from $x'$
	to $y$.
\item A region $R$ is tileable by dominoes if and only if
	for every pair of points $x, y \in \partial R$ such that $x \sim_R y$,
        one has	$h(y) - h(x) \le \alpha(x, y)$.
\end{enumerate}
\end{lem}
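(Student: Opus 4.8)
The plan is to establish the three parts in order, each feeding the next. For part (1), I would normalize so that $h(x)=0$ and reduce the global inequality to a telescoping sum of single-edge estimates along a geodesic. The local input is that across any grid edge $(u,v)$ the difference $h(v)-h(u)$ is congruent to $\alpha(u,v)$ modulo $4$ (a structural property of height functions determined by the local colouring) and lies in $[-3,3]$ by condition (3) of Lemma~\ref{fou}; the members of that residue class inside $[-3,3]$ are all at most $\alpha(u,v)\in\{1,3\}$, which yields the edge bound $h(v)-h(u)\le\alpha(u,v)$. I would then sum this over a geodesic $(x=x_1,\dots,x_n=y)$ lying in $R$, invoking the additivity of $\alpha$ along geodesics: since every subpath of a geodesic is again a geodesic, the defining identity $\alpha(x_1,x_i)+\alpha(x_i,x_n)=\alpha(x_1,x_n)$ telescopes to $\alpha(x,y)=\sum_i\alpha(x_i,x_{i+1})$. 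Combining, $h(y)-h(x)=\sum_i\big(h(x_{i+1})-h(x_i)\big)\le\sum_i\alpha(x_i,x_{i+1})=\alpha(x,y)$.

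For part (2), I would argue by a local case analysis at the terminal end of the path. Writing the given geodesic as $(x'=x_1,\dots,x_m=x)$, I split on whether the step from $x$ to the adjacent point $y$ increases the $\infty$-distance to $x'$. When $\|y-x'\|_\infty=\|x-x'\|_\infty+1$ and the white-square-on-the-left condition is met, I simply append the edge $x\to y$. Otherwise I reroute the last one or two steps so the path terminates at $y$ instead of $x$; the hypothesis that the geodesic meets $\partial R$ only possibly at $x'$ is precisely what keeps the substituted lattice points inside $R$. Verifying that the new path is still geodesic---each step advancing the $\infty$-distance by one with a white square to the left---is the routine part of this step.

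For part (3), the forward implication is immediate: a domino tiling supplies a valid height function on all of $R$, and part (1), applied to boundary pairs $x,y\in\partial R$ with $x\sim_R y$, gives $h(y)-h(x)\le\alpha(x,y)$. The converse is the substantial direction. Assuming the boundary inequalities, I would build a candidate height function by the extremal (shortest-path) formula
\begin{equation*}
h^*(z)=\min_{\substack{w\in\partial R\\ w\sim_R z}}\big(h(w)+\alpha(w,z)\big),
\end{equation*}
which is the largest function compatible with the bounds of part (1). Restriction to the boundary is where the hypothesis enters: for $w_0\in\partial R$ one has $h^*(w_0)\le h(w_0)$ trivially, while the assumed inequalities $h(w_0)-h(w)\le\alpha(w,w_0)$ give the reverse, so $h^*|_{\partial R}=h$. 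It then remains to check that $h^*$ obeys conditions (1)--(3) of Lemma~\ref{fou}.

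I expect the main obstacle to be exactly this last verification---showing $h^*$ satisfies the edge condition $|h^*(u)-h^*(v)|\le 3$ at every interior edge---and this is where part (2) is indispensable. The idea is to take a source $w$ realizing the minimum for $u$ and to choose it as the \emph{last} boundary vertex along a realizing geodesic, so that this geodesic meets $\partial R$ only at $w$ and part (2) applies to transport it to the neighbor $v$. Subadditivity of $\alpha$ (concatenating white-on-left paths) then gives $h^*(v)\le h(w)+\alpha(w,v)\le h^*(u)+\alpha(u,v)\le h^*(u)+3$, and the symmetric argument bounds the difference the other way. The delicate points are justifying that the minimizing source can be taken on the boundary with a geodesic avoiding $\partial R$ thereafter, and ruling out that adjacent points draw their minima from sources forcing an inadmissible jump; the geodesic-transport of part (2), together with the boundary inequalities, is what closes both gaps.
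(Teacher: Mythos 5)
The paper itself gives no proof of Lemma~\ref{pst_lemma}: it is imported wholesale from \cite{Pst} (see also \cite{Tas}), so there is no internal argument to compare yours against. That said, your reconstruction follows the same route as the cited source, and your treatment of part (3) is, almost verbatim, the proof the paper \emph{does} supply for its generalization, Theorem~\ref{extended_kirszbraun}: the extremal candidate $h^*(z)=\min_{w\in\partial R,\,w\sim_R z}\bigl(h(w)+\alpha(w,z)\bigr)$ is exactly the formula used there (your setting is the special case $U=\partial R$), the identification $h^*|_{\partial R}=h$ uses the boundary inequalities in the same way, and verifying the Lipschitz condition of Lemma~\ref{fou} by passing to the last boundary point on a realizing geodesic and transporting it to the neighbour via part (2) is precisely the paper's argument. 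So your plan is sound and consistent with how this result is actually established.

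Two caveats, one minor and one substantive. Minor: in part (1) you telescope over lattice edges, but geodesic steps in this paper join corners of a common unit square, so a step may be diagonal; you need the extra case $\alpha(u,v)=2$, where the height difference across the diagonal of a tiled square is $\pm 2$ (the increments around a tiled square are three $+1$'s and one $-3$), and you should pin down the convention that a geodesic ``entirely within $R$'' only takes diagonal steps across squares belonging to $R$, since otherwise even this local bound is unavailable. Substantive: part (2), the rerouting of a geodesic to an adjacent endpoint, is the one genuinely geometric ingredient of the whole lemma --- it is what both your part (3) and the paper's Theorem~\ref{extended_kirszbraun} lean on --- and you only gesture at it (``reroute the last one or two steps''). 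A complete proof requires a case analysis against the explicit formula for $\alpha$, equivalently against the $\|\cdot\|_\infty$-increment definition of geodesics: one must exhibit a path to $y$ that still increases $\|\cdot - x'\|_\infty$ by one at every step and stays inside $R$, and the hypothesis that the old path avoids $\partial R$ must be invoked to keep the substituted points in $R$. As written, this part of your proposal is a statement of intent rather than a proof, and since parts (1) and (3) are comparatively routine, part (2) is exactly where the content of the lemma lives.
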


Using this, we prove the following theorem, which extends
\cite[Lemma 2.2]{Pst} (see also \cite{Tas}):

\begin{thm}\label{extended_kirszbraun}
Let $R$ be a simply connected region in $\Z^2$ such that the point $(0, 0)$ lies
on $\partial R$. Let $H$ be the set of height functions $h$ satisfying
$h(0, 0) = 0$. Then, let $h': U \to \Z$ be defined on the set $U \supseteq
\partial R$ such that $h' = h$ on all points in $\partial R$ and such that $h'
\equiv h \pmod 4$ for all points in $U$, where $h$ is any height
function in $H$. Then, there exists $\hext \in H$ such that $\hext = h'$ for
all points in $U$ if and only if:

\begin{equation}\label{eq:tassy}
    h'(y) - h'(x) \le \alpha(x, y)
\end{equation}
for all pairs of points $x, y \in U$ such that $x \sim_R y$.
\end{thm}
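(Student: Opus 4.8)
The plan is to prove both implications, treating necessity as essentially immediate and devoting the real work to constructing the extension in the converse direction. For necessity, I would simply observe that if such an $\hext \in H$ exists, then $\hext$ is a genuine height function on $R$ agreeing with $h'$ on $U$, so Lemma~\ref{pst_lemma}(1) applied to any pair $x,y \in U$ with $x \sim_R y$ yields $h'(y) - h'(x) = \hext(y) - \hext(x) \le \alpha(x,y)$, which is exactly \eqref{eq:tassy}. This direction requires nothing beyond Lemma~\ref{pst_lemma}(1).

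For the converse I would define the maximal candidate extension
$$\hext(z) = \min_{\substack{x \in U,\ z \sim_R x}} \big(h'(x) + \alpha(x, z)\big), \qquad z \in R.$$
First I would check the index set is nonempty: running a geodesic from $z$ outward until it first exits $R$ produces a point of $\partial R \subseteq U$ geodesically reachable from $z$, so $\hext$ is well defined. To see it agrees with $h'$ on $U$, for $z \in U$ the choice $x = z$ (using $\alpha(z,z)=0$) gives $\hext(z) \le h'(z)$, while \eqref{eq:tassy} applied to each competing pair $(x,z)$ gives $h'(x) + \alpha(x,z) \ge h'(z)$; hence $\hext(z) = h'(z)$. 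In particular $\hext(0,0) = h(0,0) = 0$, so $\hext \in H$ provided it is a valid height function.

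It then remains to verify the three conditions of Lemma~\ref{fou}. Condition (2) I expect to be free, since $\hext = h$ on $\partial R$ and $h$ already satisfies it. For condition (1), I would let $\phi(z) \equiv h(z) \pmod 4$ be the phase determined by the parity class of $z$ (with $\phi(0,0)=0$); because $\alpha(x,\cdot)$ is a height function vanishing at $x$, the same pattern forces $\alpha(x,z) \equiv \phi(z)-\phi(x) \pmod 4$, and since $h'(x) \equiv h(x) \equiv \phi(x)$, every competitor satisfies $h'(x)+\alpha(x,z) \equiv \phi(z) \pmod 4$. Thus $\hext(z) \bmod 4$ depends only on the parity class of $z$, giving condition (1) regardless of which $x$ attains the minimum.

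The crux, and what I expect to be the main obstacle, is the edge bound (3): $|\hext(z)-\hext(z')| \le 3$ for adjacent $z,z' \in R$. The strategy is to fix a minimizer $x_0$ for $\hext(z)$; since $\alpha(x_0,\cdot)$ is a height function, $\alpha(x_0,z')-\alpha(x_0,z) \le 3$, so the bound follows once I know $z' \sim_R x_0$ (and symmetrically). To secure this I would first pass to a \emph{clean} minimizer: if a geodesic from $x_0$ to $z$ meets $\partial R$ at some $w \ne x_0$, then additivity of $\alpha$ along geodesics together with \eqref{eq:tassy} shows $w$ is also a minimizer, so taking the last such $w$ produces a minimizer whose geodesic to $z$ meets $\partial R$ only at its start. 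When $z$ is interior this is precisely the hypothesis of Lemma~\ref{pst_lemma}(2), which transfers the geodesic to the neighbor $z'$ and yields $z' \sim_R x_0$. The boundary cases would be dispatched separately: when both $z,z'$ lie on $\partial R$ the bound is just condition (3) for the known height function $h$; when exactly one is on $\partial R$, one inequality comes from the single-edge geodesic $(z,z')$ (note $\alpha$ of adjacent points is at most $3$) and the other from Lemma~\ref{pst_lemma}(2) applied from the interior endpoint. This reduction to a clean minimizer, so that Lemma~\ref{pst_lemma}(2) becomes applicable, is where the real difficulty lies.
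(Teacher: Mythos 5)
Your proposal is correct and takes essentially the same approach as the paper: the identical extension $\hext(z) = \min_{x \in U,\, x \sim_R z}\bigl(h'(x) + \alpha(x, z)\bigr)$, the same verification of the three conditions of Lemma~\ref{fou}, and the same key step for the edge bound, namely passing to a minimizer whose geodesic meets $\partial R$ only at its start so that Lemma~\ref{pst_lemma}(2) applies. Your additional checks (nonemptiness of the index set and the separate boundary cases) are refinements of details the paper glosses over, not a different argument.
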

\begin{proof}
The condition is necessary by Fact 1 of Lemma~\ref{pst_lemma}, so we only
need to prove sufficiency. We claim that
\[\hext(y) = \min_{x \in U, x \sim_R y} [h'(x) + \alpha(x, y)]\]
is a valid extension. First, we see that for all points $y \in U$, one has
$\hext(y) = h'(y)$ since $\alpha(y, y) = 0$, and if there were a point $x'$ such
that $x' \sim_R x$ and $h'(x) + \alpha(x, y) \le h'(y)$, the condition \eqref{eq:tassy} would not hold. This implies that the second
condition of Lemma~\ref{fou} is satisfied, as $U \supseteq \partial R$.

For the first condition, note that the value of $h'(x) + \alpha(x, y) \pmod
4$ does not depend on the choice of $x$. Since $h'$ is ``correct'' modulo 4,
$\hext$ satisfies the first condition.

Finally, for the third condition, consider adjacent points $x$ and $y$, and let
$z$ be a point such that $h'(z) + \alpha(z, y) = \hext(y)$. Consider the
geodesic path $(z, ..., x)$ from $z$ to $x$, and let $x'$ be the last point in
the path within $U$. Then, there exists a geodesic path from $x'$ to $x$,
and by the condition of the theorem, $\hext(x) = h'(x') + \alpha(x', x)$. By
Fact 2 of Lemma~\ref{pst_lemma}, this means that there exists a geodesic path
from $x'$ to $y$ and that $\hext(y) \le h'(x') + \alpha(x', y)$. Since $|
\alpha(x', y) - \alpha(x', x) | \le 3$, the condition is satisfied.
\end{proof}

For any point $x \in R$, let $S$ be the largest square centered at $x$ such
that $R \setminus S$ is still tileable with dominoes and let $c(x)$ be the
side length of that square. Then, the following surprisingly characterizes $g(x)
= \hmax(x) - \hmin(x)$ exactly:

\begin{thm}\label{square}
\[g(x) = 2 c(x)\]
\end{thm}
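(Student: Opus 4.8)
My plan is to reduce the identity $g(x) = 2c(x)$ to the single equivalence
\[ R \setminus S_r \text{ is tileable} \iff g(x) \ge 4r, \]
where $S_r$ denotes the axis-aligned square of side $2r$ centered at $x$ (the closed $\ell^\infty$-ball of radius $r$). Since $g(x) = \hmax(x) - \hmin(x)$ is a multiple of $4$ by condition (1) of Lemma~\ref{fou}, the largest $r$ for which $R \setminus S_r$ is tileable is then $r = g(x)/4$, so that $c(x) = 2r = g(x)/2$ and the theorem follows. Throughout I use the maximal- and minimal-extension formulas furnished by Theorem~\ref{extended_kirszbraun} with $U = \partial R$: writing $h$ for the fixed boundary height, \[ \hmax(x) = \min_{p \in \partial R,\, p \sim_R x}\big[h(p) + \alpha(p,x)\big], \qquad \hmin(x) = \max_{p \in \partial R,\, p \sim_R x}\big[h(p) - \alpha(x,p)\big]. \]

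The first ingredient is a \emph{free-square lemma}. If a tiling of $R$ has no domino crossing $\partial S_r$, then every edge of $\partial S_r$ lies in the tiling, so the height along $\partial S_r$ is forced by the coloring alone; call this fixed profile $\psi$. I would prove, by a direct computation on the isolated $2r \times 2r$ square (the case $r = 1$ being the single ``flip''), that $\psi$ is an exact geodesic sphere about $x$, i.e.\ that \[ \psi(q) + \alpha(q,x) \equiv \hmax^{S}(x) \quad\text{and}\quad \psi(q) - \alpha(x,q) \equiv \hmin^{S}(x) \] are both constant in $q \in \partial S_r$, and that together with the elementary identity $\alpha(q,x) + \alpha(x,q) = 4\|q - x\|_\infty = 4r$ this gives $\hmax^{S}(x) - \hmin^{S}(x) = 4r$. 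This already yields $g(x) \ge 2c(x)$: any tiling of $R \setminus S_r$ induces the canonical profile $\psi$ on $\partial S_r$, and filling $S_r$ by its two extremal tilings produces two tilings of $R$ whose center heights differ by $4r$; hence $g(x) \ge 4r$ for every removable $r$, and in particular $g(x) \ge 2c(x)$.

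For the reverse inequality I would apply Theorem~\ref{extended_kirszbraun} to $R$ with $U = \partial R \cup \partial S_r$, prescribing $h$ on $\partial R$ and $\psi + t$ on $\partial S_r$ for a free additive constant $t$ (the height level of the hole). A valid extension is precisely a tiling of $R$ in which no domino crosses $\partial S_r$, so $R \setminus S_r$ is tileable iff some admissible $t$ exists. The pairs in \eqref{eq:tassy} not involving $t$ hold automatically (those inside $\partial R$ because $R$ is tileable, those inside $\partial S_r$ by Fact~1 of Lemma~\ref{pst_lemma} applied to $\psi$), so feasibility reduces to \[ \max_{p,\,q}\big[h(p) - \psi(q) - \alpha(q,p)\big] \;\le\; t \;\le\; \min_{p,\,q}\big[h(p) - \psi(q) + \alpha(p,q)\big], \] the extrema taken over $p \in \partial R$, $q \in \partial S_r$ with $p \sim_R q$. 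Substituting $\psi(q) = \hmax^{S}(x) - \alpha(q,x)$ from the free-square lemma and using the triangle inequality $\alpha(p,q) + \alpha(q,x) \ge \alpha(p,x)$ — which is attained at the point $q$ where a geodesic from $p$ to $x$ crosses $\partial S_r$ — collapses the inner minimum to $\min_q[\alpha(p,q) + \alpha(q,x)] = \alpha(p,x)$, and symmetrically for the lower bound. The constant $\hmax^{S}(x)$ cancels, so the feasibility interval for $t$ has length exactly $g(x) - 4r$; it is nonempty iff $g(x) \ge 4r$. Taking $r = g(x)/4$ shows $R \setminus S_{g(x)/4}$ is tileable, whence $c(x) \ge g(x)/2$ and $g(x) \le 2c(x)$.

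The main obstacle is the geodesic bookkeeping in the collapse step: verifying that a geodesic from $p$ to $x$ realizing $\alpha(p,x)$ can be chosen inside $R$, that its crossing point $q \in \partial S_r$ satisfies $p \sim_R q$ and $q \sim_R x$ so that the extension constraints genuinely apply to the pair $(p,q)$, and that the extremal $p$ for $\hmax(x)$ and $\hmin(x)$ may be taken with these geodesics available. The triangle inequality alone gives the ``easy'' direction of each collapse unconditionally; the work is in the realizability, for which I would lean on Fact~2 of Lemma~\ref{pst_lemma} (transport of a geodesic to an adjacent point) together with the convexity of $S_r$ to keep the $q$-to-$x$ segment inside $\overline{S_r}$. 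The remaining routine point is the mod-$4$ admissibility of the integer $t$ at the threshold $r = g(x)/4$, where the feasibility interval collapses to a single value; this value is congruent to the residue of $h(x)$ forced by condition (1) of Lemma~\ref{fou}, so it is indeed attained.
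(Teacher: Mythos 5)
Your proposal is essentially the paper's proof reorganized, not a different method: both use Theorem~\ref{extended_kirszbraun} with $U = \partial R \cup \partial S$, the forced extremal profiles on the square's boundary (your ``free-square lemma'' is the paper's assertion that tiling $S'$ maximally/minimally gives $h(z) = h(x) \mp \alpha(z,x)$ on $\partial S'$), the triangle inequality for $\alpha$, and the identity $\alpha(q,x)+\alpha(x,q) = 4r$. The organizational difference is real, though: the paper argues by contradiction, excluding the ``wrong'' type of violated pair in \eqref{eq:tassy} by an iterative geodesic-propagation argument via Fact~2 of Lemma~\ref{pst_lemma}, and then subtracts the two surviving inequalities to get $g(x) < 4k$; you argue directly, introducing the free constant $t$ and showing feasibility when $g(x) \ge 4r$. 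Your version, executed correctly, dispenses with the propagation step entirely, which is a genuine simplification.

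Two repairs are needed. First, your ``main obstacle'' is attached to the wrong half of your claim: the exact collapse (interval of length exactly $g(x)-4r$), which is what needs geodesics from $p$ to $x$ realized inside $R$, is never used. Tileability at $r = g(x)/4$ only needs the feasibility interval to \emph{contain} $[\hmin(x)-\hmin^S(x),\,\hmax(x)-\hmax^S(x)]$. But your route to even that containment is flawed where it passes through $\alpha(p,x)$ and the formula $\hmax(x) = \min_p[h(p)+\alpha(p,x)]$, since that requires $p \sim_R x$, which can fail for a general region. The robust chain goes through $q$ instead: $\hmax(x) \le \hmax(q) + \alpha(q,x) \le h(p) + \alpha(p,q) + \alpha(q,x)$ by Fact~1 of Lemma~\ref{pst_lemma} applied to the valid height function $\hmax$, using only $p \sim_R q$ (supplied by the theorem) and $q \sim_R x$ (geodesics from $\partial S_r$ to the center stay inside the square); symmetrically with $\hmin$ for the lower constraints. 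Likewise, for pairs inside $\partial S_r$ use the global triangle inequality $\alpha(q,x) - \alpha(q',x) \le \alpha(q,q')$ rather than ``Fact~1 applied to $\psi$,'' since a geodesic in $R$ joining two points of $\partial S_r$ may leave $S_r$. Second, you never verify $S_{g(x)/4} \subseteq R$; the paper does this first, from the Lipschitz bound $|g(a)-g(b)| \le 4$ for adjacent or diagonal points, and without it the profile $\psi$, the constraint pairs, and $q \sim_R x$ are all meaningless.
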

\begin{proof}
Let $g(x) = 4k$ and let $S'$ be the $2k$ by $2k$ square centered at $x$. One
can verify that $S' \subseteq R$, as if two points $a$ and $b$
are adjacent or diagonal to each other on the grid, $|g(a) - g(b)| \le 4$. Let
us denote $\partial S'$ the outer boundary of $S'$. It is trivial to see that
$g(x) \ge 2 c(x)$, as one could always tile $S$ both maximally and minimally 
to get a difference as least as large as $g(x)$. Thus, it remains
to show that $g(x) \le 2c(x)$. %To do so, we shall prove that if
%$2c(x)$ is not at least as big as $g(x)$, $g(x) < 4k$, a contradiction. 

Consider a possible value for $h(x)$, and notice that if we tile $S'$ maximally
while keeping $h(x)$ at its value, $h(z) = h(x) - \alpha(z, x)$ for all $z \in
\partial S'$. Let $U = \partial R \cup \partial S'$, and observe that if $c(x) <
2k$, there cannot exist an extension to $h$. We note that all values of $h$
on $\partial S'$ must be the ``correct'' value modulo 4, since $h(x)$ must be the
``correct'' value modulo 4. Therefore, by Theorem~\ref{extended_kirszbraun},
there must exist a point $z' \in \partial R$ such that $z' \sim_R z$ and that
either:

\begin{equation*}
h(z) - h(z') > \alpha(z', z)
\end{equation*}

Or

\begin{equation*}
h(z') - h(z) > \alpha(z, z')
\end{equation*}

Let us first consider the first equation. Plugging in our value for $h(z)$, we
see that:

\begin{align*}
h(x) - \alpha(z, x) - h(z') &> \alpha(z', z) \\
h(x) - h(z') &> \alpha(z', z) + \alpha(z, x) \\
h(x) - h(z') &> \alpha(z', x)
\end{align*}

The last statement is true because $\alpha(\cdot, \cdot)$ satisfies the
triangle inequality. Consider the geodesic path from $z'$ to $z$, and let $z''$
be the last point on this path and on $\partial R$. Since we assume that $R$ is
tileable, this means that $h(z) - h(z'') > \alpha(z'', z)$, as $h(z') - h(z'')
\le \alpha(z'', z')$. Next consider the geodesic path from $z''$ to an adjacent
point $y$, which must exist due to the second part of Lemma~\ref{pst_lemma}. Again, let
$z'''$ be the last point on this geodesic path which is on $\partial R$, and
see that again since $R$ is tileable, $h(y) - h(z''') > \alpha(z''', y)$. We
may repeat this process, each time ensuring that $y$ is closer and closer to
$x$, and we see that we eventually get a case where there exists a $z^{\ast}$
such that $z^{\ast} \sim_R x$ and $h(x) - h(z^\ast) > \alpha(z^\ast, x)$. This
would mean a contradiction, as this would mean that our chosen value of $h(x)$
is impossible.

Therefore, the second case must be true, implying that:

\begin{equation}\label{upper}
\begin{aligned}
h(z') - h(x) + \alpha(z, x) &> \alpha(z, z') \\
h(z') - h(x) &> \alpha(z, z') - \alpha(z, x) \\
h(x) &< h(z') - \alpha(z, z') + \alpha(z, x)
\end{aligned}
\end{equation}

Next, consider tiling $S'$ minimally. If $y \in \partial S'$,
then $h(y) = h(x) + \alpha(x, y)$. Similarly to the maximum case, note that
if $R \setminus S'$ is impossible to tile, then there cannot exist an
extension of $h$ in which $S'$ is tiled minimally. Again, by Theorem~\ref{extended_kirszbraun}, this implies that either:

\begin{equation*}
h(y) - h(y') > \alpha(y', y)
\end{equation*}

Or

\begin{equation*}
h(y') - h(y) > \alpha(y, y')
\end{equation*}

Considering the second case, we again get a contradiction by an argument similar to the maximal case:

\begin{align*}
h(y') - h(x) - \alpha(x, y) &> \alpha(y, y') \\
h(y') - h(x) &> \alpha(x, y) + \alpha(y, y') \\
h(y') - h(x) &> \alpha(x, y')
\end{align*}

This means that the second case must be true, meaning the following:

\begin{equation}\label{lower}
\begin{aligned}
h(x) + \alpha(x, y) - h(y') &> \alpha(y', y) \\
h(x) &> h(y') + \alpha(y', y) - \alpha(x, y)
\end{aligned}
\end{equation}

This however, allows us to get the following bound on $g(x)$ by simply
subtracting \eqref{lower} from \eqref{upper}:

\begin{equation*}
\begin{aligned}
g(x) &< h(z') - h(y') - \alpha(z, z') - \alpha(y', y) + \alpha(z, x) +
\alpha(x, y) \\
g(x) &< \alpha(y', z') - \alpha(z, z') - \alpha(y', y) + \alpha(z, x) +
\alpha(x, y) \\
g(x) &< \alpha(y', z) - \alpha(y', y) + \alpha(z, x) + \alpha(x, y) \\
g(x) &< \alpha(y, z) + \alpha(z, x) + \alpha(x, y) \\
g(x) &< \alpha(y, x) + \alpha(x, y) \\
g(x) &< 4k
\end{aligned}
\end{equation*}

This is a contradiction which proves that $S' = S$ and that $g(x) = 2c(x)$.
\end{proof}

\section{Minimum maximum excess}

For a given subset of black squares in $R$, denoted by $U$, let
$N(U)$ denote the neighborhood of $U$ in $R$. The excess
$e(U)$ is defined as $|N(U)| - |U|$. Consider an ordering of the black squares of
$R$, denoted by $b_1, b_2, ..., b_n$. Then, for a positive integer $k \le n$,
let $B_k=\{b_1, b_2, ..., b_k\}$. 
The \textit{maximum excess} of the ordering is the maximum of
$e(B_k)$ when considering all possible values of $k$. Furthermore, define the
\textit{minimum maximum excess} to be the smallest maximum excess over all
possible orderings of the black squares of $R$. Riddle proved the following
link between the minimum maximum excess and the forcing number:

\begin{thm}[\cite{Rid}]\label{riddle}
The forcing number of a region $R$ is bounded below by the minimum maximum
excess over the orderings of the black squares of $R$.
\end{thm}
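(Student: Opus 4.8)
The plan is to use the standard reformulation of forcing sets in terms of unique perfect matchings and then to convert the resulting triangular structure into a good ordering of the black squares. Throughout, let $G$ be the dual graph of $R$ with black squares $b_1,\dots,b_n$ and white squares $w_1,\dots,w_n$ (the two colour classes have the same size since $R$ is tileable), and fix an arbitrary perfect matching $M$ of $G$ together with a minimum forcing set $S\subseteq M$, so $|S|=f(M,G)$. The goal is to produce a single ordering of the black squares whose maximum excess is at most $|S|$; since $M$ is arbitrary this yields, after taking the minimum over $M$, that $f(R)$ is at least the minimum maximum excess.

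First I would record the matching-theoretic meaning of $S$. A perfect matching of $G$ contains $S$ if and only if it restricts to a perfect matching of $G-V(S)$, the graph obtained by deleting the $2|S|$ squares saturated by $S$; hence $S$ is forcing precisely when $G-V(S)$ has a \emph{unique} perfect matching, namely $M\setminus S$. Write $B'$ and $W'$ for the black and white squares not saturated by $S$, and $D$ for the $|S|$ white squares that are saturated by $S$, so $|B'|=|W'|=n-|S|$ and $|D|=|S|$.

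The crux is the classical structure of bipartite graphs with a unique perfect matching, which I would isolate as a lemma: if a bipartite graph with equal colour classes $X,Y$ has a unique perfect matching $\{x_iy_i\}$, then the indices can be arranged so that $N(\{x_1,\dots,x_k\})\subseteq\{y_1,\dots,y_k\}$ for every $k$ (a ``triangular'' form, in which every prefix has excess $0$). I would prove this by induction on $|X|$: such a graph has no $M$-alternating cycle, and an elementary alternating-walk argument then forces the existence of a vertex of degree $1$ (otherwise one could close up an alternating cycle). If that vertex is black, I place it and its matched partner first and recurse; if it is white, I place it and its partner last and recurse. Checking that the prefix-neighbourhood inclusion survives each peeling step is the point that requires the most care, since one must treat the black and white cases separately and verify that the deleted matched pair never enlarges the neighbourhood of an earlier prefix beyond its allotted white squares; this is the main obstacle of the argument, while everything else is bookkeeping. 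Applying the lemma to $G-V(S)$ gives an ordering $b'_1,\dots,b'_{n-|S|}$ of $B'$ with $|N_{G-V(S)}(\{b'_1,\dots,b'_k\})|\le k$ for all $k$.

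Finally I would assemble the ordering of all black squares by listing $b'_1,\dots,b'_{n-|S|}$ first and then appending the $|S|$ black squares of $V(S)$ in any order, and bound the excess of each prefix in the full graph $G$. For a prefix $\{b'_1,\dots,b'_k\}$ lying entirely inside $B'$, every white neighbour lies in $W'\cup D$; the neighbours in $W'$ are exactly $N_{G-V(S)}(\{b'_1,\dots,b'_k\})$ and number at most $k$, while the neighbours in $D$ number at most $|D|=|S|$, so the excess is at most $(k+|S|)-k=|S|$. For a prefix that already contains all of $B'$ together with $j$ of the appended squares, its size is $(n-|S|)+j$ while its neighbourhood is contained in the $n$ white squares, so the excess is at most $n-\big((n-|S|)+j\big)=|S|-j\le|S|$. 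Thus every prefix has excess at most $|S|=f(M,G)$, so this ordering has maximum excess at most $f(M,G)$; taking the minimum over orderings and then over $M$ gives $f(R)\ge$ (minimum maximum excess), as desired.
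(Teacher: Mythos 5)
The paper does not actually prove this statement: it is imported verbatim from Riddle's paper \cite{Rid}, so there is no internal proof to compare against. Your argument is correct and self-contained, and it essentially reconstructs Riddle's original proof. The three pillars all check out: (i) $S\subseteq M$ is forcing if and only if $G-V(S)$ has a unique perfect matching, namely $M\setminus S$; (ii) the triangularization lemma for bipartite graphs with a unique perfect matching, which you prove by repeatedly extracting a degree-one vertex (the existence of which follows from the absence of alternating cycles); and (iii) the assembly step, where prefixes inside $B'$ have at most $k$ neighbours in $W'$ plus at most $|S|$ neighbours in $D$, and prefixes containing all of $B'$ have excess at most $|S|-j$. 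The delicate point you correctly flag and resolve is the peeling induction: a degree-one \emph{black} vertex goes first (its partner may later absorb extra adjacencies harmlessly, since it already lies in every subsequent prefix's allotted white set), while a degree-one \emph{white} vertex goes last (and cannot enlarge any earlier prefix's neighbourhood precisely because its only neighbour is its own partner). An equivalent, slightly slicker route to (ii), worth knowing, is to form the digraph on matched pairs with an arc $i\to j$ whenever $x_i\sim y_j$, observe that uniqueness of the matching makes it acyclic, and take a topological order; but your induction is equally valid. In short: the proposal fills in, correctly, a proof that this paper delegates to the literature.
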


Lam and Pachter applied this idea to prove the following about squares:

\begin{thm}[\cite{Lp}]\label{lam}
The minimum maximum excess of a $2n \times 2n$ square is equal to~$n$.
\end{thm}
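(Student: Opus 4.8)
The plan is to prove the two matching bounds separately. For the upper bound I would exhibit a single explicit ordering of the $2n^2$ black squares whose maximum excess equals $n$, namely the \emph{reading order}: list the black squares row by row from top to bottom, and within each row from left to right. The crucial observation is a coverage identity: once all black squares of rows $1,\dots,m$ have been listed (for $1\le m\le 2n-1$), the white squares currently in the neighborhood are exactly those lying in rows $1,\dots,m+1$. This is because the $n$ black squares of row $m$ occupy precisely the columns whose downward neighbors are the $n$ white squares of row $m+1$, so all of row $m+1$ is already covered by vertical adjacency. Hence at the completion of each such row the excess is $(m+1)n-mn=n$. The partial prefixes then behave well: within row $1$ (read left to right) each new black square contributes exactly two new white neighbors, so the excess climbs monotonically $1,2,\dots,n$; within any later row $m\le 2n-1$ the coverage identity shows the only possibly-new neighbor is the square directly below, so each black square contributes exactly one new white square and the excess stays at $n$; and within the final row there is no row below, so the excess descends $n,n-1,\dots,0$. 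Thus this ordering has maximum excess exactly $n$, giving minimum maximum excess at most $n$.

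For the lower bound I would fix an arbitrary ordering and inspect the single prefix $B_{n^2}$ of its first $n^2$ black squares, i.e.\ exactly half of all black squares. Since every ordering passes through this prefix, it suffices to prove the purely combinatorial (vertex-isoperimetric) inequality that every set $U$ of $n^2$ black squares satisfies $|N(U)|\ge n^2+n$, equivalently $e(U)\ge n$. This at once forces the maximum excess of every ordering to be at least $n$, and hence the minimum maximum excess to be at least $n$, which together with the upper bound gives the claimed value $n$. It is convenient to phrase the inequality through the complementary set $\bar U$ of $n^2$ excluded black squares: calling a white square \emph{buried} when all of its black neighbors lie in $\bar U$, the inequality says that at most $n^2-n$ white squares are buried.

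I expect this isoperimetric inequality to be the main obstacle. The extremal sets are the ``half-board'' configurations — all black squares of the top $n$ rows, or of any full-width band of $n$ rows, or of a full-height band of $n$ columns — for which $e(U)=n$ exactly, matching the reading-order construction. My plan to establish it is a compression (shifting) argument: define a monotone operation that slides the black squares of $U$ upward, breaking ties to the left, in such a way that the excess $|N(U)|-|U|$ never increases, and show that it terminates at a top-aligned band of $n$ rows, whose excess is $n$; minimality of the half-board then yields $e(U)\ge n$ for every $U$. The delicate points are verifying that each compression step is genuinely non-increasing on the excess — here the two-dimensional coupling between a square's vertical and horizontal neighbors has to be tracked carefully, since moving a square up can simultaneously free and create white neighbors — and confirming that the procedure halts at the asserted extremal configuration rather than some spurious fixed point. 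As a sanity check one can verify the inequality directly for small $n$, where the only white squares that can be buried are the low-degree squares near the corners, and even these already force $e(U)\ge n$.
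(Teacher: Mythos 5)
Your upper bound is essentially right: the reading order works, and your coverage identity is genuine, since the white squares of row $m+1$ lie exactly below the black squares of row $m$. (One parity slip: depending on the coloring, the first black square of row $1$ may have three white neighbors, so the intra-row excesses in row $1$ run $2,3,\dots,n,n$ rather than $1,\dots,n$; the maximum is still $n$.) Your reduction of the lower bound is also valid as far as it goes: every ordering passes through the prefix $B_{n^2}$, so it would suffice to prove that every set $U$ of $n^2$ black squares satisfies $e(U)\ge n$.

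That isoperimetric inequality, however, is the entire content of the lower bound, and your plan for it does not go through as described. Beyond the parity issue (sliding a black square up by one lands it on a white cell, so shifts must be by two rows or taken diagonally, as in Lam--Pachter's transformations), the fatal point is your termination claim. Up-then-left compression does not halt only at the top band of $n$ rows: its fixed points are all ``staircase'' sets, i.e.\ sets that are top-aligned in every column and left-aligned in every row. Concretely, the black squares with $i+j\le 2n-1$ together with the $n$ black squares with $i+j=2n+1$ and $j\le n$ form such a fixed point of size exactly $n^2$; its excess is exactly $n$, so it is a second extremal configuration that is not a band. Hence, even granting that each compression step never increases the excess (a verification you explicitly defer, and which is delicate for exactly the reasons you name), the argument terminates at ``some staircase of size $n^2$,'' and you would still have to prove $e\ge n$ for all staircases --- which is where the real work lies and is absent. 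It is instructive to compare with the literature the paper relies on: the paper itself does not prove Theorem~\ref{lam} (it cites \cite{Lp}), but its own analogue for hexagons, Theorem~\ref{thm:trilam}, adapts the Lam--Pachter argument, and that argument never fixes the prefix size at $n^2$. Instead it applies the shift transformations to the prefixes $B_k$ and examines the \emph{first} $k$ for which the compressed $B_k$ meets all $n$ bottom rows, at which moment an excess of at least $n$ is read off row by row. Choosing the prefix adaptively, as a function of the ordering, is precisely what lets that proof avoid the fixed-size isoperimetric inequality on which your plan depends.
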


With these two theorems, as well as Theorem~\ref{square}, we can now prove
the main theorem of this paper.

\begin{proof}[Proof of Theorem~\ref{main}]
Let $x$ be the point where $g$ attains its maximum. Set $g(x)=4n$ and let $S$ denote the
$2n$ by $2n$ square centered at $x$, which by Theorem~\ref{square} must be
contained in $R$. For any subset of black squares $U$, let $N_i(U)$ be the set
of all white squares inside $S$ which are adjacent to at least one black square
in $U$. Similarly, let $N_o(U)$ be the set of all white squares outside $S$
that are adjacent to at least one black square in $U$. Notice that because
Theorem~\ref{square} states that $R \setminus S$ must be tileable, any set of
squares outside $S$ must satisfy the Hall Marriage Condition. Specifically, for
any set of black squares $V$ such that $V \cap S = \emptyset$, $|N_o(V)| - |V|
\ge 0$. Now, consider the excess of any given subset $U$ as follows:

\begin{align*}
e(U) &= |N(U)| - |U| \\
&= |N_i(U)| + |N_o(U)| - |U| \\
&= |N_i(U \cap S) \cup N_i(U \setminus S)| + |N_o(U \cap S) \cup N_o(U
\setminus S)| - |U| \\
&\ge |N_i(U \cap S)| + |N_o(U \setminus S)| - |U \cap S| - |U \setminus S| \\
&\ge |N_i(U \cap S)| - |U \cap S| + |N_o(U \setminus S)| - |U \setminus S| \\
&\ge |N_i(U \cap S)| - |U \cap S|.
\end{align*}

This shows that the excess of any given subset $U$ in $R$ is at least the
excess of $U$ restricted to $S$. Next, for any ordering of
the black squares of $R$, consider the sets $B_k$ associated with this
ordering. We see that the $B_k$'s must sequentially contain black squares
inside $S$, forcing some order upon the black squares on $S$. Furthermore, by
the above, the maximum excess of these $B_k$'s must be bounded below by the
maximum excess of the ordering of the squares inside $S$. By Theorem~\ref{lam},
this implies that the maximum excess of any ordering is bounded below by $n$.
This means that the minimum maximum excess of $R$ is at least $n$. This
completes the proof by Theorem~\ref{riddle}.
\end{proof}

\section{Extension to the triangular lattice}

The concept of a height function can be naturally extended to the triangular lattice \cite{Thu}. In this setting, the analog of a domino is a \textit{lozenge} (or diamond), a shape formed by two adjacent triangles. The height function on the triangular lattice can even create an optical illusion, with lozenge tilings perceived as three-dimensional shapes (see Figure~\ref{fig:maxmin}). In this model, we assign black and white colors to the triangles such that all upward-facing triangles are black and all downward-facing triangles are white. For a tiling $T$, we define the height function such that, for every edge $(x, y)$ in a lozenge where a white triangle lies to the left, $h_T(x) - h_T(y) = 1$.

We show that the triangular lattice admits analogs of Lemma~\ref{fou} and Theorem~\ref{extended_kirszbraun}, with the number 4 replaced by 3 throughout.

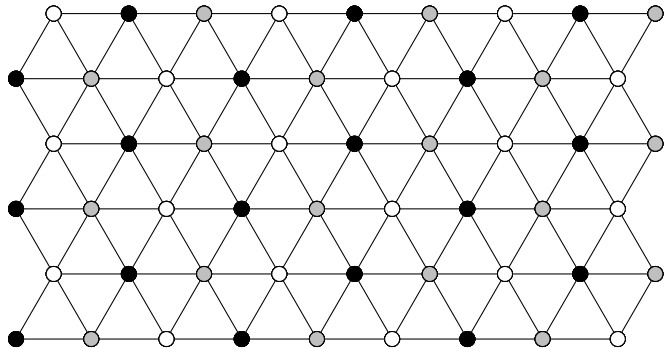
\begin{figure}[ht]
\centering
\begin{tikzpicture}
    % Define colors for the nodes
    \def\blacknode{black}
    \def\whitenode{white}
    \def\graynode{gray!50}
    
    % Parameters for the grid
    \def\rownum{5}  % Number of rows
    \def\colnum{8}  % Number of columns
    
    % Define the spacing
    \def\xshift{1} % Horizontal shift
    \def\yshift{0.866} % Vertical shift for hexagon
    
    \foreach \row in {0,...,\rownum} {
        \foreach \col in {0,...,\colnum} {
            \pgfmathsetmacro\x{(\col + mod(\row, 2) * 0.5) * \xshift}
            \pgfmathsetmacro\y{\row * \yshift}
            \pgfmathtruncatemacro\rowmod{mod(\row, 2)} 
            
            % Horizontal edges
            \ifnum\col<\colnum
                \pgfmathsetmacro\xnext{(\col + 1 + \rowmod * 0.5) * \xshift}
                \draw (\x, \y) -- (\xnext, \y);
            \fi
            
            % Diagonal edges (down-right)
            \ifnum\row<\rownum
                \ifnum\col<\colnum
                    \pgfmathsetmacro\xnext{(\col + 0.5 + 0.5 * \rowmod) * \xshift}
                    \pgfmathsetmacro\ynext{(\row + 1) * \yshift}
                    \draw (\x, \y) -- (\xnext, \ynext);
                \fi
                \ifnum\col=\colnum
                    \ifnum\rowmod=0
                        \pgfmathsetmacro\xnext{(\col + 0.5) * \xshift}
                        \pgfmathsetmacro\ynext{(\row + 1) * \yshift}
                        \draw (\x, \y) -- (\xnext, \ynext);
                    \fi
                \fi
            \fi
            
            %Diagonal edges (down-left)
            \ifnum\row<\rownum
                \ifnum\col>0
                \pgfmathsetmacro\xprev{(\col - 0.5 + 0.5 * mod(\row, 2)) * \xshift}
                \pgfmathsetmacro\ynext{(\row + 1) * \yshift}
                \draw (\x, \y) -- (\xprev, \ynext);
                \fi
                \ifnum\col=0
                \ifnum\rowmod=1
                \pgfmathsetmacro\xprev{(\col - 0.5 + 0.5 * mod(\row, 2)) * \xshift}
                \pgfmathsetmacro\ynext{(\row + 1) * \yshift}
                \draw (\x, \y) -- (\xprev, \ynext);
                \fi
                \fi
            \fi
        }
            \foreach \row in {0,...,\rownum} {
        \foreach \col in {0,...,\colnum} {
            % Calculate the position
            \pgfmathsetmacro\x{(\col + mod(\row, 2) * 0.5) * \xshift}
            \pgfmathsetmacro\y{\row * \yshift}
            \pgfmathtruncatemacro\colorindex{mod(mod(\row, 2) + 2*\col, 3)}
            \ifnum\colorindex=0
                \node[draw, fill=\blacknode, circle, inner sep=2pt] at (\x, \y) {};
            \else
                \ifnum\colorindex=1
                    \node[draw, fill=\whitenode, circle, inner sep=2pt] at (\x, \y) {};
                \else
                    \node[draw, fill=\graynode, circle, inner sep=2pt] at (\x, \y) {};
                \fi
            \fi
        }
    }
    }
\end{tikzpicture}
\caption{Triangular lattice}
\label{fig:triang}
\end{figure}

\begin{lem}\label{lem:triFou}
Let $H_R$ be the set of all height functions corresponding to tilings of $R$ on a triangular grid. Then, a function $h: R \to \Z$ belongs to $H_R$ if and only if the following conditions hold:
\begin{enumerate}
    \item For every $x, y \in R$ such that $x$ and $y$ have the same color in a coloring from Figure~\ref{fig:triang}, we have $h(x) \equiv h(y) \pmod 3$.
    \item For every edge $(x, y) \in \partial R$ such that when crossing from $x$ to $y$ a white triangle is on the left, $h(x) - h(y) = 1$.
    \item For every edge $(x, y) \in R$, $|h(x) - h(y)| \le 2$.
\end{enumerate}
\end{lem}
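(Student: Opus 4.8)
The plan is to follow the template of Fournier's proof of Lemma~\ref{fou}, changing the arithmetic from modulus $4$ to modulus $3$ and the edge bound from $3$ to $2$. As on the square lattice, I would treat the two directions separately: \emph{necessity}, showing that the height function of an actual lozenge tiling obeys conditions (1)--(3), and \emph{sufficiency}, reconstructing a tiling from any $h$ satisfying them. The arithmetic backbone of the whole argument is the observation that the $3$-colouring of Figure~\ref{fig:triang} is exactly the one for which crossing any edge with a white triangle on the left raises the colour index by $1 \pmod 3$; this is checked directly from the colouring formula and is what links the edge-difference data to condition (1).

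For necessity, I would start from the defining rule that a perimeter edge $(x,y)$ of a lozenge with a white triangle on its left satisfies $h(x)-h(y)=1$, while the interior diagonal glueing the two triangles of a lozenge satisfies $h(x)-h(y)=-2$ under the same orientation (the latter being forced by telescoping the three differences around a single triangle to $0$). Since $1\equiv -2 \pmod 3$, every edge traversed with white on the left raises $h$ by $1$ modulo $3$, so $h$ agrees with the colour of each vertex modulo $3$ up to a global constant; as this holds at every vertex, any two equally coloured vertices share a residue, giving condition (1). Condition (2) is immediate, since an edge of $\partial R$ cannot be straddled by a lozenge and is therefore always a perimeter edge. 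Condition (3) follows because every interior edge is either a perimeter edge, with $|h(x)-h(y)|=1$, or an interior diagonal, with $|h(x)-h(y)|=2$.

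For sufficiency, given $h$ satisfying (1)--(3), I would declare an edge $(x,y)$ to be a lozenge diagonal exactly when $|h(x)-h(y)|=2$ and a perimeter edge when $|h(x)-h(y)|=1$. The key local statement is that each triangle carries exactly one diagonal. Orienting the three edges of a triangle so that the white triangle always lies to the left --- clockwise around an up-triangle, counterclockwise around a down-triangle --- the three differences $h(x)-h(y)$ telescope to $0$ around the closed loop. Conditions (1) and (3) together confine each such difference to $\{1,-2\}$: the congruence forbids the value $0$ and fixes the residue to $1\pmod 3$, while the bound forbids $\pm 3$. The only way three numbers from $\{1,-2\}$ can sum to $0$ is $1+1+(-2)$, so each triangle has exactly one diagonal edge. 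Because ``being a diagonal'' is a symmetric property of the edge, the two triangles across any diagonal select one another, and this pairing is a perfect matching of the triangles, i.e.\ a lozenge tiling; condition (2) guarantees consistency along $\partial R$. Since the tiling was built to realise the prescribed edge differences, its height function equals $h$.

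The main obstacle I anticipate is bookkeeping rather than conceptual: fixing the orientation conventions so that ``white on the left'' yields the value $1$ (and not $-1$) consistently on both up- and down-triangles, and verifying that condition (1) is precisely what pushes each oriented edge difference into $\{1,-2\}$ rather than $\{-1,2\}$. Once the local ``exactly one diagonal per triangle'' claim is secured, assembling the global tiling and matching it to $h$ is routine, just as in the square-lattice case.
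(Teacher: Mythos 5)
The paper offers no proof of this lemma at all --- it is asserted as the ``analog of Lemma~\ref{fou} with 4 replaced by 3'' --- so your proposal has to stand on its own, and it has one genuine gap. The necessity half is fine (indeed it proves something stronger than condition (1): that $h$ agrees modulo $3$ with the colour index up to a single global constant). The problem is in the sufficiency half, at exactly the step you dismissed as bookkeeping: the claim that conditions (1) and (3) confine each white-on-left oriented edge difference to $\{1,-2\}$. Condition (1) as stated only forces $h \bmod 3$ to be \emph{constant on each colour class}; it says nothing about how the residues of the three classes relate to each other. Since the two endpoints of an edge always have different colours, condition (1) by itself never forbids an edge difference of $0$ or of $-1$; the residue of each class is pinned relative to the others only by condition (2), and only for those colour classes that occur as endpoints of boundary edges --- which need not be all three.

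This is not a repairable bookkeeping issue, because the lemma as literally stated is false, so no proof of it can close the gap. Take $R$ to be the unit hexagon formed by the six triangles around a single vertex $O$. The six boundary vertices alternate between two colours and carry the forced boundary heights $1,2,1,2,1,2$; the centre $O$ is the \emph{unique} vertex of the third colour, so condition (1) imposes nothing on $h(O)$. The two lozenge tilings of $R$ give $h(O)\in\{0,3\}$, yet setting $h(O)=1$ satisfies (1), (2) and (3) (all six incident differences are $0$ or $1$ in absolute value), so this $h$ is not a height function even though it passes all three tests --- and it directly refutes your assertion that ``the congruence forbids the value $0$.'' (The paper's restatement of Fournier's Lemma~\ref{fou} has the same defect: for the $2\times 2$ square the centre is the only vertex with both coordinates odd, and intermediate centre values pass conditions (1)--(3) there as well.) Your argument does go through verbatim if condition (1) is strengthened to the aligned form $h(x)-h(y)\equiv \mathrm{col}(x)-\mathrm{col}(y) \pmod 3$, with colours read cyclically so that a white-on-left crossing raises the colour index by one --- equivalently, $h$ is congruent mod $3$ to a fixed position-dependent potential, which is how Fournier's mod-$4$ condition is meant. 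The right move is therefore to state and prove that strengthened version; as written, your derivation of $\{1,-2\}$ from the stated condition (1) is a step that fails.
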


To adapt Theorem~\ref{extended_kirszbraun} to this lattice, we redefine $\alpha(x, y)$ as the length of the shortest path from $x$ to $y$ that always maintains a white triangle to the left. 
A path $(x_1, x_2, \dots, x_n)$ is said to be geodesic if, for every $i$, the following holds:
$$\alpha(x_1, x_i) + \alpha(x_i, x_n) = \alpha(x_1, x_n).$$
Under these definitions, the statements in Lemma~\ref{pst_lemma} apply as stated.

\begin{thm}\label{thm:triextended_kirszbraun}
Let $R$ be a simply connected region in the triangular lattice such that $(0, 0) \in \partial R$. Let $H$ be the set of height functions $h$ where $h(0, 0) = 0$. Suppose $h': U \to \Z$ is defined on $U \supseteq \partial R$ such that $h' = h$ on $\partial R$ and $h' \equiv h \pmod 3$ for points in $U$, for some $h \in H$. Then, there exists an extension $\hext \in H$ satisfying $\hext = h'$ on $U$ if and only if:
\begin{equation}\label{eq:tritassy}
    h'(y) - h'(x) \le \alpha(x, y)
\end{equation}
for all pairs $x, y \in U$ such that $x \sim_R y$.
\end{thm}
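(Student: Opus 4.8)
The plan is to follow the proof of Theorem~\ref{extended_kirszbraun} essentially verbatim, substituting the triangular-lattice version of $\alpha$, invoking Lemma~\ref{lem:triFou} in place of Lemma~\ref{fou}, and replacing the constants $4$ and $3$ by $3$ and $2$ respectively. As before, necessity of \eqref{eq:tritassy} is immediate from Fact~1 of Lemma~\ref{pst_lemma}, which the paper has already noted applies as stated on the triangular lattice, so only sufficiency requires work. I would again propose the candidate extension
\[
\hext(y) = \min_{x \in U,\, x \sim_R y}\bigl[h'(x) + \alpha(x, y)\bigr],
\]
and verify that it satisfies the three conditions of Lemma~\ref{lem:triFou}.

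First I would check that $\hext$ agrees with $h'$ on $U$: since $\alpha(y,y)=0$ we have $\hext(y) \le h'(y)$, while \eqref{eq:tritassy} forces $h'(x)+\alpha(x,y) \ge h'(y)$ for every admissible $x$, giving the reverse inequality. Because $U \supseteq \partial R$, this simultaneously establishes the boundary condition (Condition~2 of Lemma~\ref{lem:triFou}). For Condition~1, the point is that $h'(x) + \alpha(x,y) \bmod 3$ is independent of the choice of $x \sim_R y$; since $h'$ is correct modulo $3$ by hypothesis, so is $\hext$.

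The remaining and most delicate step is Condition~3, the requirement $|\hext(x) - \hext(y)| \le 2$ for adjacent $x,y$. The argument parallels the square case: choosing $z$ with $h'(z)+\alpha(z,y)=\hext(y)$, taking a geodesic path from $z$ to $x$ and letting $x'$ be its last point in $U$, Fact~2 of Lemma~\ref{pst_lemma} produces a geodesic from $x'$ to $y$ and yields $\hext(x) = h'(x')+\alpha(x',x)$ together with $\hext(y) \le h'(x')+\alpha(x',y)$. The conclusion then reduces to the lattice-specific inequality $|\alpha(x',y)-\alpha(x',x)| \le 2$ for adjacent $x,y$. I expect this bound to be the main obstacle, since it is exactly where the triangular geometry replaces the square one: on the triangular lattice the maximal admissible height change along a single edge is $2$ rather than $3$, reflecting that a detour around the two remaining sides of a triangle has length $2$, whereas in $\Z^2$ a detour around the three remaining sides of a unit square has length $3$. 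Concretely, $\alpha(x,y) \le 2$ and $\alpha(y,x)\le 2$ for adjacent $x,y$, so the triangle inequality for $\alpha$ delivers the desired bound. Once this is in hand, all three conditions of Lemma~\ref{lem:triFou} hold and $\hext$ is the required extension.
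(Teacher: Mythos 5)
Your proposal is correct and takes exactly the approach the paper intends: the paper's entire proof of Theorem~\ref{thm:triextended_kirszbraun} is the single remark that the proof of Theorem~\ref{extended_kirszbraun} goes through with $4$ and $3$ replaced by $3$ and $2$, and your write-up carries out precisely that substitution, using the same candidate extension $\hext(y) = \min_{x \in U,\, x \sim_R y}[h'(x) + \alpha(x,y)]$ and the three conditions of Lemma~\ref{lem:triFou}. In fact you supply a detail the paper leaves implicit, namely why $|\alpha(x',y)-\alpha(x',x)| \le 2$ for adjacent points (a detour around a white triangle has length $2$, plus the triangle inequality for $\alpha$), which is exactly the lattice-specific fact that makes the transported argument work.
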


The proof of Theorem~\ref{extended_kirszbraun} holds here with the numbers 4 and 3 replaced by 3 and 2, respectively.

The next theorem, analogous to Theorem~\ref{square}, characterizes height function differences on the triangular lattice.

\begin{thm}
Let $c(x)$ denote the side of the largest regular hexagon $S$ centered at $x$ such that $R \setminus S$ remains tileable with lozenges. Then:
$$\hmax(x) - \hmin(x) = 3c(x).$$
\end{thm}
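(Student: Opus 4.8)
The plan is to follow the proof of Theorem~\ref{square} essentially line by line, replacing the $\ell^\infty$-ball (a square) by the natural metric ball of the triangular lattice, which is a regular hexagon, and every structural constant $4$ by $3$. The role of the edge bound $|h(x)-h(y)|\le 3$ is now played by condition (3) of Lemma~\ref{lem:triFou}, and the extension principle is supplied by Theorem~\ref{thm:triextended_kirszbraun}. I first record the modular setup: by condition (1) of Lemma~\ref{lem:triFou}, $\hmax(x)$ and $\hmin(x)$ agree modulo $3$, so $g(x):=\hmax(x)-\hmin(x)$ is a multiple of $3$; write $g(x)=3k$ and let $S'$ be the regular hexagon of side $k$ centered at $x$. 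As before it suffices to prove the two inequalities $g(x)\ge 3c(x)$ and $g(x)\le 3c(x)$, which together force $S'=S$ and $g(x)=3c(x)$.

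Two geometric facts replace their square-lattice counterparts. First, on the triangular lattice the round-trip cost satisfies $\alpha(x,y)+\alpha(y,x)=3\,d(x,y)$, where $d$ is the graph metric whose balls are regular hexagons; in particular the maximum $3k$ of this cost over $\partial S'$ is attained exactly at the six vertices, which lie at distance $k$ from $x$. Second, $g$ is Lipschitz: for neighboring lattice points $a,b$, conditions (1) and (3) of Lemma~\ref{lem:triFou} confine $h(a)-h(b)$ to a two-element set of width $3$ (the two elements of $\{-2,-1,1,2\}$ in a fixed residue class mod $3$), so $\hmax(a)-\hmax(b)$ and $\hmin(a)-\hmin(b)$ lie in the same such set and $|g(a)-g(b)|\le 3$. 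Since $g\equiv 0$ on $\partial R$ and $g(x)=3k$, this bound forces $\partial R$ to avoid the open hexagon of radius $k$, so $S'\subseteq R$, exactly as $|g(a)-g(b)|\le 4$ was used in the square case. The lower bound $g(x)\ge 3c(x)$ is then immediate from the definition of $c(x)$: the boundary heights of a regular hexagon are forced up to a global additive constant, so any tiling of the tileable region $R\setminus S$ induces the same heights on $\partial S$, and completing it maximally and then minimally inside $S$ yields two tilings of $R$ whose heights at $x$ differ by exactly $3c(x)$.

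For the upper bound I argue by contradiction as in Theorem~\ref{square}. Fixing $h(x)$ and tiling $S'$ maximally gives $h(z)=h(x)-\alpha(z,x)$ on $\partial S'$; if $c(x)<k$, i.e. $R\setminus S'$ is not tileable, then no extension exists and Theorem~\ref{thm:triextended_kirszbraun} produces $z'\in\partial R$ with $z'\sim_R z$ violating \eqref{eq:tritassy}. The case $h(z)-h(z')>\alpha(z',z)$ is excluded by the same propagation argument: using the triangle inequality for $\alpha$, the tileability of $R$, and Fact 2 of Lemma~\ref{pst_lemma}, one walks a sequence of geodesic endpoints steadily toward $x$ until reaching some $z^\ast\sim_R x$ with $h(x)-h(z^\ast)>\alpha(z^\ast,x)$, contradicting tileability of $R$. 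Hence the other case holds, giving $h(x)<h(z')-\alpha(z,z')+\alpha(z,x)$. Tiling $S'$ minimally gives $h(y)=h(x)+\alpha(x,y)$ on $\partial S'$ and, symmetrically, $h(x)>h(y')+\alpha(y',y)-\alpha(x,y)$ for some $y'\in\partial R$. Subtracting and collapsing the result with the triangle inequalities for $\alpha$ and the tileability bound $h(z')-h(y')\le\alpha(y',z')$ yields $g(x)<\alpha(y,x)+\alpha(x,y)\le 3k$, the final step being the first geometric fact applied to $y\in\partial S'$. This contradicts $g(x)=3k$, so $R\setminus S'$ is tileable, $c(x)\ge k$, and $g(x)=3c(x)$.

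The main obstacle is the first geometric fact: establishing that the round-trip cost equals $3$ times a hexagonal graph distance and, in particular, that its balls are regular hexagons whose vertices realize the maximum on $\partial S'$. This is precisely what pins the shape of $S$ down to a hexagon (rather than a triangle or rhombus) and what makes the closing inequality $\alpha(y,x)+\alpha(x,y)\le 3k$ tight, with equality only at the vertices. Once this identity and the accompanying triangle inequality for $\alpha$ are in hand, every remaining step is the triangular transcription of Theorem~\ref{square}, the local estimate $|\alpha(x',y)-\alpha(x',x)|\le 2$ for adjacent $x,y$ (the analog of $\le 3$) being already built into Theorem~\ref{thm:triextended_kirszbraun}.
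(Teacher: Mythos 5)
Your proposal is correct and takes essentially the same route as the paper: the paper's proof likewise reduces to the argument of Theorem~\ref{square} with every occurrence of $4$ replaced by $3$, using the Lipschitz bound $|g(a)-g(b)|\le 3$ between adjacent points to get $S'\subseteq R$, the extension theorem (Theorem~\ref{thm:triextended_kirszbraun}) to produce the violating boundary pairs, and the round-trip identity on $\partial S'$ to close the final chain of inequalities. One small correction to your ``first geometric fact'': the identity $\alpha(x,y)+\alpha(y,x)=3k$ holds for \emph{every} point $y\in\partial S'$ (all points of the boundary of the hexagonal ball of radius $k$ lie at graph distance exactly $k$ from $x$, which is also what your own formula $\alpha(x,y)+\alpha(y,x)=3\,d(x,y)$ implies), not only at the six corners --- but since your argument uses only the inequality $\alpha(y,x)+\alpha(x,y)\le 3k$, this misstatement is harmless.
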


\begin{proof}
Let $g(x) = \hmax(x) - \hmin(x) = 3k$, and let $S'$ be the regular hexagon with side length $k$ centered at $x$. It follows that $S' \subseteq R$, since $g(x)$ differs by at most 3 between adjacent points. The rest of the proof mirrors that of Theorem~\ref{square}, with all occurrences of 4 replaced by 3. In particular, the final step uses the fact that for any point $y$ on the boundary of $S'$, we have $\alpha(x, y) + \alpha(y, x) = 3k$.
\end{proof}

Theorem~\ref{lam} also generalizes to regular hexagons in the triangular lattice:

\begin{thm}\label{thm:trilam}
The minimum maximum excess of a regular hexagon $R_n$ with side length $n$ is $n$.
\end{thm}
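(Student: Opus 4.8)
The plan is to pin the minimum maximum excess of $R_n$ between two matching bounds, following the strategy behind Theorem~\ref{lam} but replacing the square's diagonal structure with the hexagon's horizontal strips. First I would record the basic counts. Orienting $R_n$ with two horizontal sides, the horizontal lattice lines cut it into $2n$ strips; writing $s_i$ for the number of unit segments on the $i$-th horizontal line, one has $s_0=n,\ s_1=n+1,\dots,s_n=2n,\dots,s_{2n}=n$. Since the up-triangles of a strip sit on its lower line and the down-triangles on its upper line, the strip between lines $i$ and $i+1$ has $s_i$ black (up) and $s_{i+1}$ white (down) triangles; summing shows $R_n$ has $3n^2$ triangles of each colour and is tileable, so every subset of black triangles has nonnegative excess.

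For the upper bound I would exhibit a \emph{strip sweep}: process the black triangles strip by strip from bottom to top, and within each strip from left to right. Once all strips below line $i$ are complete, the acquired white triangles are exactly those below line $i$, so the excess equals $\sum_{j<i}(s_{j+1}-s_j)=s_i-n\le n$. A short check of the alternating pattern inside a strip (strips below the middle begin and end with down-triangles, those above with up-triangles) shows the running excess never exceeds $(s_i-n)+1$ below the middle and $s_i-n$ at or above it; both are at most $n$, the value $n$ being attained near the widest line. Hence this ordering has maximum excess exactly $n$, so the minimum maximum excess is $\le n$.

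For the lower bound I would fix the widest line $L$ (where $s=2n$) and split $R_n$ into the bottom trapezoid $H^-$ and top trapezoid $H^+$. The only edges crossing $L$ join the $2n$ down-triangles $c_1,\dots,c_{2n}$ just below it to the $2n$ up-triangles $a_1,\dots,a_{2n}$ just above, in a perfect matching $c_j\!-\!a_j$; consequently every white triangle of $H^+$ is adjacent only to black triangles of $H^+$. A telescoping count gives $|W^-|-|B^-|=s_n-s_0=n$, while $|W^+|-|B^+|=-n$. The aim is to show every ordering has a prefix of excess $\ge n$, and the clean route is a compression argument: I would show that any ordering can be transformed, by adjacent transpositions that never increase the maximum excess, into a \emph{strip-monotone} ordering in which all blacks of a lower strip precede those of a higher strip. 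Any strip-monotone ordering, at the moment it has finished exactly the strips below $L$, has excess $s_n-n=n$; since compression does not increase the maximum, every ordering has maximum excess $\ge n$.

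The main obstacle is the per-swap estimate in the compression. When a higher-strip black $b$ immediately precedes a lower-strip black $b'$, one must verify that after swapping them the single prefix that changes does not acquire so many new white triangles that its excess exceeds the original maximum. This is exactly where the geometry enters: because cross-edges occur only along horizontal lines, a lower black reaches white triangles lying below it, which are already acquired (or are about to be) by the earlier blacks, so promoting $b'$ creates no net surplus beyond what $b$ already contributed. Making this monotonicity precise, together with handling the boundary strips where a triangle may have a missing side-neighbour, is the crux of the proof and is the triangular-lattice analogue of the square case established in Theorem~\ref{lam}.
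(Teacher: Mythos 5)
Your strip-sweep upper bound is correct, and is in fact more explicit than anything in the paper, whose own proof only establishes the lower bound (the direction actually used for Theorem~\ref{thm:trimain}). The problem is the lower bound, and it sits exactly where you placed it: the per-swap estimate is not merely the unproven crux --- it is false. Write $P$ for the common prefix preceding the adjacent pair $b$ (higher strip), $b'$ (lower strip); since the swap changes only one prefix, the estimate you need is
\[
e(P \cup \{b'\}) \le \max\bigl(e(P \cup \{b\}),\; e(P \cup \{b, b'\})\bigr).
\]
Now take $b$ in the interior of a high strip and let $P$ consist of the six other black triangles adjacent to the three white neighbors of $b$, so that $N(b) \subseteq N(P)$; take $b'$ in the interior of the second strip from the bottom, far enough away that its three white neighbors avoid $N(P)$. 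Then $e(P \cup \{b\}) = e(P) - 1$ and $e(P \cup \{b,b'\}) = e(P) + 1$, while after the swap $e(P \cup \{b'\}) = e(P) + 2$, so the displayed inequality fails and the swap strictly increases the maximum excess over the prefixes it affects. Your geometric justification (``promoting $b'$ creates no net surplus beyond what $b$ already contributed'') breaks precisely when $b$ contributes nothing new, which a prefix can always arrange. So sorting an arbitrary ordering into a strip-monotone one cannot be controlled swap by swap, and the reduction to the single computation $e(B) = s_n - n = n$ collapses.

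The paper sidesteps this by compressing the \emph{sets}, not the \emph{ordering}, following Lam and Pachter \cite{Lp}. One defines shift operators $f_\theta$, $\theta = k\pi/3$, acting on sets of black triangles, with three properties: $f_\theta$ preserves the number of triangles, it does not increase the excess, and it is monotone in the sense that $U \subseteq V$ implies $f_\theta(U) \subseteq f_\theta(V)$. These are statements about one set at a time, so no comparison between prefixes of two different orderings is ever required --- which is exactly the step that fails in your scheme. Given an arbitrary ordering, monotonicity implies that the compressed prefixes $f_0(f_{-\pi/3}(B_k))$ grow at most one triangle at a time, so there is a first $k$ at which the compressed prefix meets all $n$ bottom rows; a row-by-row count for such a compressed set, transferred back through the excess inequality, yields $e(B_k) \ge n$ for that prefix of the \emph{original} ordering. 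Your trapezoid bookkeeping ($|W^-| - |B^-| = n$, and the fact that no edges leave $H^-$ downward) survives as the evaluation of one special prefix of one special ordering --- it is what makes the bound sharp --- but the mechanism that transfers that count to every ordering has to be compression of sets, not transposition of orderings.
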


\begin{proof}
This proof adapts arguments from~\cite{Lp}. For a set $U$ of black triangles in $R_n$, there are six transformations, corresponding to shifts along the directions $\theta = \frac{k\pi}{3}$ for $k \in \Z/6\Z$. Each shift transformation preserves or increases $|N(U)|$, the count of white triangles bordering at least one black triangle. For any direction $\theta$ of the form $\frac{k\pi}{3}$ and any subset $U \subseteq V$, we have $f_\theta(U) \subseteq f_\theta(V)$. Now consider the first set $B_k$ such that $f_0(f_\frac{-\pi}{3}(B_k))$ spans every row among the $n$ bottom horizontal rows of $R_n$. Such a $k$ exists because $f_0(f_\frac{-\pi}{3}(B_k))$ grows one triangle at a time. Also, if a row is non-empty, every row below it must also be non-empty. Therefore, $B_k$ has at least one triangle less in each of the $n$ bottom rows than $N(B_k)$, ensuring that its excess is at least $n$.
\end{proof}

\begin{proof}[Proof of Theorem~\ref{thm:trimain}]
Let $x$ be the point where $g$ attains its maximum, with $g(x) = 3n$. Let $S$ be the regular hexagon centered at $x$ with side length $n$. By Theorem~\ref{thm:trilam}, $S$ is contained in $R$, and $R \setminus S$ is tileable by lozenges. The remainder of the proof proceeds as in Theorem~\ref{main}.
\end{proof}

Now we can calculate the forcing numbers for hexagons on a triangular lattice. The sides of the hexagon in the counterclockwise order should have the form $a$, $b$, $c$, $a+t$, $b-t$ and $c+t$. Otherwise, the boundary will not close. When one goes around the boundary, the total change in height function is equal to $a-b+c-(a+t)+(b-t)-(c+t) = 3t$. Thus, a hexagon lacking central symmetry cannot support a valid height function at the boundary, making it non-tileable. Thus, we restrict our focus to centrally symmetric hexagons, which are described by three natural numbers $a$, $b$ and $c$, representing side lengths in three directions.

\begin{figure}[ht]
\centering
\usetikzlibrary{calc}
\newcommand*\rowsa{3} 
\newcommand*\rowsb{4}  
\newcommand*\rowsc{6} 

% \resizebox{\linewidth}{!}{
\begin{tikzpicture}[scale=0.75]
    \foreach \row in {0, ..., 2} {
        \foreach \col in {0, ..., 5} {
            \draw ($(0.5 + \col, 0.866 * 7) + \row*(0.5, -0.866)$) --++ (1, 0) --++ (0.5, -0.866) --++ (-1, 0) --++ (-0.5, 0.866);
        }
    }
    \node[label={[label distance=-3mm]30:$x$}] at (2, 3.464) {};

    \foreach \col in {0, ..., 2} {
        \foreach \row in {0, ..., 3} {
            \draw ($(0, 0.866 * 6) - \row*(0.5, 0.866) + \col*(0.5, -0.866)$) --++ (0.5, -0.866) --++ (0.5, 0.866) --++ (-0.5, 0.866) --++ (-0.5, -0.866);  % Ensure to close the shape
        }
    }

    \foreach \row in {0, ..., 3} {
        \foreach \col in {0, ..., 5} {
            \draw ($(0, 0) + \row*(0.5, 0.866) + \col*(1, 0)$) --++ (1, 0) --++ (0.5, 0.866) --++ (-1, 0) --++ (-0.5, -0.866);
        }
    }
    
    \foreach \row in {0, ..., 3} {
        \foreach \col in {0, ..., 5} {
            \draw ($(11.5 + \col, 0.866 * 7) + \row*(-0.5, -0.866)$) --++ (1, 0) --++ (-0.5, -0.866) --++ (-1, 0) --++ (0.5, 0.866);
        }
    }

    \foreach \col in {0, ..., 2} {
        \foreach \row in {0, ..., 3} {
            \draw ($(17, 0.866 * 6) - \row*(0.5, 0.866) + \col*(0.5, -0.866)$) --++ (0.5, -0.866) --++ (0.5, 0.866) --++ (-0.5, 0.866) --++ (-0.5, -0.866);
        }
    }

    \foreach \row in {0, ..., 2} {
        \foreach \col in {0, ..., 5} {
            \draw ($(9.5 + \col, 0.866 * 3) + \row*(0.5, -0.866)$) --++ (1, 0) --++ (0.5, -0.866) --++ (-1, 0) --++ (-0.5, 0.866);
        }
    }
\end{tikzpicture}
% }
    \caption{Minimal and maximal tilings of a hexagon with sides $3$, $4$, $6$ with lozenges.}
    \label{fig:maxmin}
\end{figure}
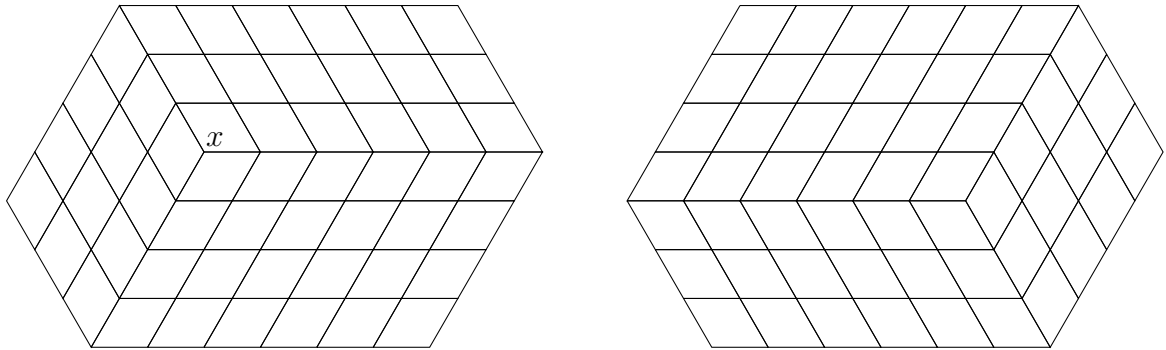

\begin{thm}\label{thm:forcehex}
For a centrally symmetric hexagon on the triangular lattice with sides $a$, $b$ and $c$, the forcing number is $\min(a, b, c)$.
\end{thm}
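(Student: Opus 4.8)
The plan is to prove the two matching inequalities $f(R) \ge \min(a,b,c)$ and $f(R) \le \min(a,b,c)$ separately. The lower bound will come from the height-function machinery already developed, applied at the center of the hexagon, while the upper bound will require an explicit tiling together with a forcing set of the claimed size. Throughout I write $H(a,b,c)$ for the centrally symmetric hexagon with side lengths $a,b,c$ and set $m = \min(a,b,c)$.

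For the lower bound, let $x_0$ be a vertex at the center of symmetry of $R$. By the triangular analog of Theorem~\ref{square} we have $\hmax(x_0) - \hmin(x_0) = 3\,c(x_0)$, where $c(x_0)$ is the side of the largest regular hexagon centered at $x_0$ whose complement in $R$ is tileable by lozenges. I would establish $c(x_0) \ge m$ by checking that the central regular hexagon $S = H(m,m,m)$ has a tileable complement: since $m \le a,b,c$, the hexagon $S$ sits inside $R$ with the same center, and $R \setminus S$ decomposes, by central symmetry, into parallelogram-shaped regions (degenerate hexagons with one vanishing side), each of which is trivially lozenge-tileable. Equivalently, one can read the value $\hmax(x_0) - \hmin(x_0) = 3m$ off the two extreme tilings drawn in Figure~\ref{fig:maxmin}. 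Either way, Theorem~\ref{thm:trimain} yields
$$ f(R) \ge \tfrac{1}{3}\big(\hmax(x_0) - \hmin(x_0)\big) \ge m. $$

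For the upper bound I would exhibit a single tiling $T$ together with a forcing set $F \subseteq T$ of size $m$, in the spirit of the staircase of Figure~\ref{fig:ONEM}. Assuming without loss of generality that $a = m$, the idea is to take $F$ to be a ``staircase'' of $m$ lozenges running through $x_0$ across the short direction of $R$, chosen so that the region obtained from $R$ by deleting the cells covered by $F$ falls apart into parallelograms. Since a parallelogram, viewed as a degenerate hexagon $H(p,q,0)$, has a \emph{unique} lozenge tiling, this deletion leaves a region admitting exactly one tiling; hence no $T$-alternating cycle can avoid $F$, so $F$ forces $T$ and $f(T,R) \le m$. Combined with the lower bound, this gives $f(R) = m$.

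The main obstacle is the upper bound, and specifically verifying that $m$ lozenges truly force: one must position the staircase $F$ so that \emph{every} one of the (generally many) $T$-alternating cycles meets $F$, equivalently that $R$ with $F$ removed is rigid. I would prove this through the ``unique tiling of a parallelogram'' principle, checking that each of the finitely many pieces cut out by $F$ is genuinely a parallelogram; the bookkeeping of side lengths in the asymmetric case $a < b < c$ is where the care is needed. A secondary point to pin down, on the lower-bound side, is the decomposition of $R \setminus S$ into tileable parallelograms, which I would justify directly from the geometry of centrally symmetric hexagons.
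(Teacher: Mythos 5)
The decisive gap is in your upper bound. Choosing $F$ so that $R$ minus the cells of $F$ ``falls apart into parallelograms'' is not achievable, and the inference you draw from it is not valid. If ``falls apart'' means that the \emph{connected components} of $R\setminus F$ are parallelograms, then $m$ lozenges are far too few to create the necessary cuts: already for the regular hexagon $H(1,1,1)$, where $m=1$ and any single lozenge of a tiling is a forcing set, deleting that lozenge leaves a connected chevron of four triangles, which is not a parallelogram and does not disconnect. If instead ``falls apart'' merely means that $R\setminus F$ can be \emph{partitioned} into parallelograms, then uniqueness of the tiling of each piece does not imply uniqueness of the tiling of $R\setminus F$, because tiles may cross the cuts of the partition: the hexagon $H(a,b,c)$ itself is partitioned into three parallelograms by either extreme tiling in Figure~\ref{fig:maxmin}, yet it has many tilings. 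So the rigidity of $R\setminus F$ must be proved directly (by a propagation argument along the staircase, or via height functions), and this is exactly the content of the paper's one-line claim that its staircase of $a$ lozenges, placed at the corner between the sides of lengths $b$ and $c$ inside the minimal tiling (Figure~\ref{fig:forcing_hexagon}), uniquely determines that tiling. Your mechanism, as stated, cannot substitute for that claim.

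Your lower bound has a smaller, repairable gap: the center of symmetry of $H(a,b,c)$ is a vertex of the triangular lattice only when $a\equiv b\equiv c\pmod 2$. For $H(1,1,2)$, and even for the paper's running example $H(3,4,6)$ of Figure~\ref{fig:maxmin}, the center is the midpoint of a lattice edge, so ``the vertex $x_0$ at the center of symmetry'' does not exist, $\hmax(x_0)-\hmin(x_0)$ is undefined, and there is no lattice hexagon $S$ centered there. This is precisely why the paper evaluates $g$ not at the center but at the point $x$ on the bisector of the angle between the sides of lengths $b$ and $c$, at lattice distance $a$ from that corner: such a point is always a vertex, and the two extreme tilings give $g(x)=3a$ there, after which Theorem~\ref{thm:trimain} finishes the bound just as in your write-up. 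When $a,b,c$ do all have the same parity, your version is sound --- the complement of the central regular hexagon of side $m$ is indeed tileable, and a partition into parallelograms suffices for \emph{tileability} (though, as noted above, never for uniqueness) --- so the fix is simply to replace your $x_0$ by the paper's corner-adjacent point in the general case.
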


\begin{proof}
Without loss of generality, assume $a \le b \le c$ and consider the maximal and minimal tilings of $R$, as depicted in Figure~\ref{fig:maxmin}. To establish the upper bound of $a = \min(a, b, c)$, observe the forcing set of size $a$, originating diagonally from the angle between the sides of lengths $b$ and $c$, as illustrated in Figure~\ref{fig:forcing_hexagon}. This forcing set is part of the minimal tiling and uniquely determines it.

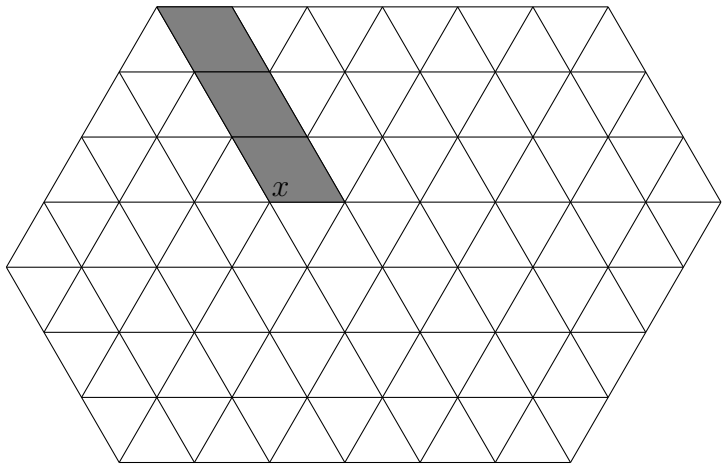
\begin{figure}[ht]
\centering
\usetikzlibrary{calc}
\newcommand*\rowsa{3} 
\newcommand*\rowsb{4}  
\newcommand*\rowsc{6} 

\begin{tikzpicture}
    \foreach \row in {0, 1, ..., \numexpr\rowsa+\rowsb} {
        \ifnum\row<\rowsa
            \coordinate (start) at ($\row*(-0.5, 0.866)$);
        \else
            \coordinate (start) at ($\rowsa*(-0.5, 0.866) + {\row-\rowsa}*(0.5, 0.866)$);
        \fi
        \ifnum\row<\rowsb
            \coordinate (end) at ($(\rowsc, 0) + \row*(0.5, 0.866)$);
        \else
            \coordinate (end) at ($(\rowsc, 0) + \rowsb*(0.5, 0.866) + {\row-\rowsb}*(-0.5, 0.866)$);
        \fi
        \draw (start) -- (end);
    }

    \foreach \col in {0, 1, ..., \numexpr\rowsb+\rowsc} {
        \ifnum\col<\rowsb
            \coordinate (start) at ($\rowsa*(-0.5, 0.866)+\col*(0.5, 0.866)$);
        \else
            \coordinate (start) at ($\rowsa*(-0.5, 0.866)+\rowsb*(0.5, 0.866) + {\col-\rowsb}*(1, 0)$);
        \fi
        \ifnum\col<\rowsc
            \coordinate (end) at ($\col*(1, 0)$);
        \else
            \coordinate (end) at ($\rowsc*(1, 0) + {\col-\rowsc}*(0.5, 0.866)$);
        \fi
        \draw (start) -- (end);
    }

    \foreach \col in {0, 1, ..., \numexpr\rowsa+\rowsc} {
        \ifnum\col<\rowsc
            \coordinate (start) at ($\rowsc*(1, 0)+\col*(-1, 0)$);
        \else
            \coordinate (start) at (${\col-\rowsc}*(-0.5, 0.866)$);
        \fi
        \ifnum\col<\rowsa
            \coordinate (end) at ($\rowsc*(1, 0) + \rowsb*(0.5, 0.866) + \col*(-0.5, 0.866)$);
        \else
            \coordinate (end) at ($\rowsc*(1, 0) + \rowsb*(0.5, 0.866) + \rowsa*(-0.5, 0.866) + {\col-\rowsa}*(-1, 0)$);
        \fi
        \draw (start) -- (end);
    }

    \draw[fill=gray] (0.5, 0.866 * 7) --++ (1, 0) --++ (0.5, -0.866) --++ (-1, 0) --++ (-0.5, 0.866);
    \draw[fill=gray] (1, 0.866 * 6) --++ (1, 0) --++ (0.5, -0.866) --++ (-1, 0) --++ (-0.5, 0.866);
    \draw[fill=gray] (1.5, 0.866 * 5) --++ (1, 0) --++ (0.5, -0.866) --++ (-1, 0) --++ (-0.5, 0.866);
    \node[label={[label distance=-3mm]30:$x$}] at (2, 3.464) {};
\end{tikzpicture}
    \caption{Forcing set for a hexagon with sides $3$, $4$, $6$.}
    \label{fig:forcing_hexagon}
\end{figure}

The lower bound follows from Theorem~\ref{thm:trimain}. Let $x$ be the point on the bisector of the angle between the sides of lengths $b$ and $c$, positioned at a distance $a$ from the angle. It is easy to see that $g(x) = 3a$ and so the forcing number equals $a = \min(a, b, c)$.
\end{proof}

\section*{Acknowledgments}

This paper originated from discussions between the authors at the Olga Radko 
Math Circle --- a Sunday math school for gifted children. Determining the forcing number of a $6 \times 6$ square was 
one of the problems featured in a handout on tilings, taken from the 2012 
Olimpiada Nacional Escolar de Matemática in Peru. The authors express their 
gratitude to the Math Circle for the valuable insights and learning experiences
it provided.

We are also thankful to Professor Igor Pak for many useful remarks.
%as well as for helping to prepare the manuscript.

\end{document}